 \newcommand\Aut{\mathrm{Aut}}
 \newcommand\calB{\mathcal{B}}
\newcommand\calD{\mathcal{D}}
\newcommand\Nor{\mathbb{N}}
        \newcommand\Sym{\mathrm{Sym}} 
\newcommand\val{\mathrm{val}}
\newcommand\ZZ{\mathbb{Z}}
\newtheorem{theorem}{Theorem}[section]
\newtheorem{lemma}[theorem]{Lemma}
\newtheorem{problem}[theorem]{Problem}
\theoremstyle{definition}
\newtheorem{definition}[theorem]{Definition}
\newtheorem{example}[theorem]{Example}
\newtheorem{remark}[theorem]{Remark}
\long\def\delete#1{}
\definecolor{Blue}{rgb}{0,0,1}
\definecolor{Red}{rgb}{1,0,0}
\definecolor{DarkGreen}{rgb}{0,0.6,0}
\definecolor{DarkYellow}{rgb}{1,1,0.2}
\definecolor{DarkPurple}{rgb}{.6,0,1}
\newcommand{\red}[1]{{\color{Red}{#1}}}
\newcommand{\zhou}[1]{\red{\bf [by Sanming: #1]}}
\begin{document}
\openup 0.5\jot

\title{Stability of graph pairs}

\author[Qin]{Yan-Li Qin}
\address{School of Statistics\\Capital University of Economics and Business\\Beijing, 100070\\ P. R. China}
\email{ylqin@cueb.edu.cn}

\author[Xia]{Binzhou Xia}
\address{School of Mathematics and Statistics\\The University of Melbourne\\Parkville, VIC 3010\\Australia}
\email{binzhoux@unimelb.edu.au}

\author[Zhou]{Jin-Xin Zhou}
\address{Department of Mathematics\\Beijing Jiaotong University\\Beijing, 100044\\ P. R. China}
\email{jxzhou@bjtu.edu.cn}

\author[Zhou]{Sanming Zhou}
\address{School of Mathematics and Statistics\\The University of Melbourne\\Parkville, VIC 3010\\Australia}
\email{sanming@unimelb.edu.au}

\maketitle

\begin{abstract}
We start up the study of the stability of general graph pairs. This notion is a generalization of the concept of the stability of graphs. We say that a pair of graphs $(\Gamma,\Sigma)$ is stable if $\Aut(\Gamma\times\Sigma) \cong \Aut(\Gamma)\times\Aut(\Sigma)$ and unstable otherwise, where $\Gamma\times\Sigma$ is the direct product of $\Gamma$ and $\Sigma$. An unstable graph pair $(\Gamma,\Sigma)$ is said to be a nontrivially unstable graph pair if $\Gamma$ and $\Sigma$ are connected coprime graphs, at least one of them is non-bipartite, and each of them has the property that different vertices have distinct neighbourhoods. We obtain necessary conditions for a pair of graphs to be stable. We also give a characterization of a pair of graphs $(\Gamma, \Sigma)$ to be nontrivially unstable in the case when both graphs are connected and regular with coprime valencies and $\Sigma$ is vertex-transitive. This characterization is given in terms of the $\Sigma$-automorphisms of $\Gamma$, which are a new concept introduced in this paper as a generalization of both automorphisms and two-fold automorphisms of a graph.

\textit{Key words:} stable graph; stable graph pair; direct product of graphs
\end{abstract}

\section{Introduction}

All graphs considered in this paper are finite, undirected and simple, unless stated otherwise. As usual, for a graph $\Gamma$ we use $V(\Gamma)$, $E(\Gamma)$ and $\Aut(\Gamma)$ to denote its vertex set, edge set and full automorphism group, respectively, and we use $\val(\Gamma)$ to denote the valency of $\Gamma$ if $\Gamma$ is regular. For a vertex $u$ of $\Gamma$, the \emph{neighborhood} of $u$ in $\Gamma$, denoted by $N_\Gamma(u)$, is the set of vertices adjacent to $u$ in $\Gamma$. For two adjacent vertices $u, v$ in a graph, the edge between them is denoted by the unordered pair $\{u, v\}$. 

Let $\Gamma$ and $\Sigma$ be graphs. The \emph{direct product} of $\Gamma$ and $\Sigma$, denoted by $\Gamma\times\Sigma$, is the graph with vertex set $V(\Gamma)\times V(\Sigma)$ such that two vertices $(u,x),(v,y)\in V(\Gamma)\times V(\Sigma)$ are adjacent if and only if $u$ and $v$ are adjacent in $\Gamma$ and $x$ and $y$ are adjacent in $\Sigma$. Clearly,
\begin{equation}\label{eq1}
\Aut(\Gamma)\times\Aut(\Sigma) \lesssim \Aut(\Gamma\times\Sigma).
\end{equation}
Herein and in the sequel we use $X \lesssim Y$ to indicate that $X$ is isomorphic to a subgroup of $Y$, and $\times$ on the left-hand side denotes the direct product of groups.
In the literature much attention has been paid to the automorphism group $\Aut(\Gamma\times\Sigma)$ of $\Gamma\times\Sigma$. In particular, the question of when the equality in~\eqref{eq1} holds has attracted considerable interest (see, for example, \cite{Dorfler1974, HIK2011}). In line with this we introduce the following definition.

\begin{definition}
A graph pair $(\Gamma,\Sigma)$ is called \emph{stable} if $\Aut(\Gamma\times\Sigma) \cong \Aut(\Gamma)\times\Aut(\Sigma)$ and \emph{unstable} otherwise. 
\end{definition}

Note that $(\Gamma,\Sigma)$ is stable if and only if $(\Sigma,\Gamma)$ is stable. The notion of the stability of graph pairs generalizes the concept of the stability of graphs \cite{MSZ1989}, in the sense that a graph $\Gamma$ is stable if and only if the graph pair $(\Gamma,K_2)$ is stable, where $K_2$ is the complete graph with two vertices. Introduced by Maru\v{s}i\v{c} et al. \cite{MSZ1989} in the language of symmetric $(0, 1)$ matrices, the stability of graphs has been studied extensively (see, for example, \cite{QXZ2019,Surowski2001,Surowski2003,Wilson2008}) owing to its close connections with regular embeddings of canonical double covers \cite{NS1996}, two-fold automorphisms of graphs \cite{LMS2015}, and generalized Cayley graphs \cite{MSZ1992}. For example, the stability of circulant graphs was studied by Wilson in \cite{Wilson2008}, and an open question in \cite{Wilson2008} about the stability of arc-transitive circulant graphs was answered and an infinite family of counterexamples to a conjecture of Maru\v{s}i\v{c} et al. \cite{MSZ1989} was constructed by Qin et al. in \cite{QXZ2019}. A conjecture of Wilson \cite{Wilson2008} about the stability of generalized Petersen graphs was recently proved by Qin et al. in \cite{QXZ2020}. 

A graph $\Gamma$ is said to be \emph{$R$-thick} \cite{HIK2011} if there exist distinct vertices $u, v$ of $\Gamma$ such that $N_\Gamma(u)=N_\Gamma(v)$. Graphs that are not $R$-thick are said to be \emph{$R$-thin} \cite{HIK2011} or \emph{vertex-determining \cite{QXZ2019, Wilson2008}}. A graph is said to be \emph{prime} (with respect to the direct product) if it has order greater than $1$ and cannot be represented as a direct product of two graphs of smaller orders, where the \emph{order} of a graph is defined as its number of vertices. Since the direct product of graphs is an associative and commutative operation, the direct product of more than two graphs is well defined up to isomorphism. An expression $\Gamma\cong\Gamma_1\times\Gamma_2\times\cdots\times\Gamma_k$ with each $\Gamma_i$ prime is called a \emph{prime factorization} of $\Gamma$ (with respect to the direct product). It is well known \cite[Theorem 8.17]{HIK2011} that up to permutation of factors any non-bipartite graph with order greater than $1$ has a unique prime factorization. Two graphs are called \emph{coprime} (with respect to the direct product) if they do not have any common factor of order greater than $1$. In particular, any two graphs of coprime orders must be coprime.

\begin{remark}\label{coprime-valencies}
If two graphs have a common factor with respect to the direct product, then their valencies must have a common divisor greater than 1. Thus regular graphs with coprime valencies must be coprime. 
\end{remark}

Our first main result in this paper gives necessary conditions for a graph pair to be stable.

\begin{theorem}\label{thm1}
Let $(\Gamma,\Sigma)$ be a stable pair of graphs. Then $\Gamma$ and $\Sigma$ are coprime $R$-thin graphs. Moreover, if in addition both $\Aut(\Gamma)$ and $\Aut(\Sigma)$ are nontrivial groups, then both $\Gamma$ and $\Sigma$ are connected and at least one of them is non-bipartite.
\end{theorem}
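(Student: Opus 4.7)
The plan is to prove each of the four assertions by contrapositive, exhibiting in each case an automorphism of $\Gamma\times\Sigma$ that cannot be written as $\gamma\times\sigma$ with $(\gamma,\sigma)\in\Aut(\Gamma)\times\Aut(\Sigma)$. For the coprime assertion, suppose instead that $\Gamma$ and $\Sigma$ share a common factor $\Lambda$ with $|V(\Lambda)|\geq 2$, and write $\Gamma\cong\Gamma'\times\Lambda$ and $\Sigma\cong\Sigma'\times\Lambda$. Then $\Gamma\times\Sigma\cong\Gamma'\times\Sigma'\times\Lambda\times\Lambda$, and the map $((a,x),(b,y))\mapsto((a,y),(b,x))$ exchanging the two $\Lambda$-copies is an automorphism of $\Gamma\times\Sigma$ by the symmetry of the adjacency condition in the two $\Lambda$-coordinates; its first output coordinate depends on the $\Sigma$-input $y$, so it cannot factor as $\gamma\times\sigma$.

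For the $R$-thin assertion, suppose $\Gamma$ has distinct vertices $u,v$ with $N_\Gamma(u)=N_\Gamma(v)$ (which forces $u\not\sim v$ since $\Gamma$ is simple). Pick any $x_0\in V(\Sigma)$ and let $\phi$ swap $(u,x_0)$ with $(v,x_0)$ and fix every other vertex of $\Gamma\times\Sigma$; the neighbourhood equality makes $\phi$ an automorphism. Provided $|V(\Sigma)|\geq 2$, pick any $x_1\neq x_0$: if $\phi=\gamma\times\sigma$, then $\phi(u,x_0)=(v,x_0)$ gives $\gamma(u)=v$, while $\phi(u,x_1)=(u,x_1)$ gives $\gamma(u)=u$, contradicting $u\neq v$.

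For the final two assertions, assume both $\Aut(\Gamma)$ and $\Aut(\Sigma)$ are nontrivial. If $\Gamma$ is disconnected with component $\Gamma_1$, pick any nontrivial $\sigma\in\Aut(\Sigma)$ and define $\phi$ on $V(\Gamma\times\Sigma)$ to be $1\times\sigma$ on $V(\Gamma_1)\times V(\Sigma)$ and the identity elsewhere. Since every edge of $\Gamma\times\Sigma$ joins two vertices whose $\Gamma$-coordinates lie in a single component of $\Gamma$, $\phi$ preserves adjacency; if $\phi=\gamma\times\tau$, then evaluating on pairs $(u,x)$ with $u\in V(\Gamma_1)$ forces $\tau=\sigma$, while evaluating on $u\in V(\Gamma)\setminus V(\Gamma_1)$ forces $\tau=1$, contradicting $\sigma\neq 1$. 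A symmetric argument, using that $(\Gamma,\Sigma)$ is stable iff $(\Sigma,\Gamma)$ is stable, handles disconnected $\Sigma$.

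Finally, suppose $\Gamma$ and $\Sigma$ are both connected and bipartite. By Weichsel's theorem, $\Gamma\times\Sigma$ has exactly two connected components $C_1,C_2$. If some nontrivial $\sigma\in\Aut(\Sigma)$ preserves the bipartition of $\Sigma$, then $1\times\sigma$ preserves each $C_i$, and defining $\phi$ to be $1\times\sigma$ on $C_1$ and the identity on $C_2$ gives the desired extra automorphism by the same slice-wise analysis; symmetrically for a bipartition-preserving nontrivial $\gamma\in\Aut(\Gamma)$. The main obstacle is the remaining subcase where every nontrivial $\gamma\in\Aut(\Gamma)$ swaps the bipartition of $\Gamma$ and every nontrivial $\sigma\in\Aut(\Sigma)$ swaps the bipartition of $\Sigma$, so that $|\Aut(\Gamma)|=|\Aut(\Sigma)|=2$; here the double swap $\gamma\times\sigma$ preserves each $C_i$ (the two swaps compensate), and setting $\phi=\gamma\times\sigma$ on $C_1$ and identity on $C_2$ finishes by the same slice-wise argument.
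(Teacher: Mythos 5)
Your proof is correct. For the coprime, $R$-thin and disconnected parts your constructions coincide with the paper's (the swap of the two $\Lambda$-coordinates, the transposition of two vertices with equal neighbourhoods on a single $\Sigma$-slice, and the component-wise application of a nontrivial automorphism of $\Sigma$), differing only in packaging: the paper routes every such witness through the subgroup $P(\Gamma,\Sigma)$ of partition-preserving automorphisms and the group $\Aut_\Sigma(\Gamma)$ (Lemmas \ref{notP} and \ref{nontrivial}), whereas you argue directly that the witness does not lie in the canonical copy of $\Aut(\Gamma)\times\Aut(\Sigma)$ and conclude that the abstract groups have different orders; since all groups here are finite this is the same cardinality argument, just made explicit. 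The genuine divergence is in the both-bipartite case. The paper's Lemma \ref{bipartite} is indirect: it counts to show that one of the two components of $\Gamma\times\Sigma$ must itself carry a nontrivial automorphism, extends that by the identity, and derives a contradiction with stability via the $\Sigma$-automorphism machinery, never needing to know how automorphisms of $\Gamma$ or $\Sigma$ interact with the bipartitions. You instead split into cases according to whether some nontrivial automorphism preserves the relevant bipartition, and in each case build the component-wise automorphism by hand from $\gamma$ and $\sigma$ themselves, using the observation that if every nontrivial automorphism swaps the parts then the automorphism group has order $2$ and the double swap $\gamma\times\sigma$ stabilises each component. Your version is more elementary and self-contained but requires the case split and the routine verification that the chosen product map stabilises each component; the paper's is shorter once the $P(\Gamma,\Sigma)$ formalism is in place and avoids the case analysis entirely. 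Both are valid proofs of the theorem.
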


In this paper we are only interested in graphs with nontrivial automorphism groups. Under this assumption Theorem~\ref{thm1} implies that in studying the stability of graph pairs we can focus on those pairs $(\Gamma,\Sigma)$ such that $\Gamma$ and $\Sigma$ are connected coprime $R$-thin graphs and at least one of them is non-bipartite. This fact motivates the following definition.

\begin{definition}
An unstable graph pair $(\Gamma,\Sigma)$ is said to be \emph{nontrivially unstable} if $\Gamma$ and $\Sigma$ are connected coprime $R$-thin graphs and at least one of them is non-bipartite.
\end{definition}

Since a graph not coprime to $K_2$ is necessarily bipartite, we see that $(\Gamma, K_2)$ is nontrivially unstable if and only if $\Gamma$ is a non-bipartite connected $R$-thin unstable graph. Such a graph $\Gamma$ is called \emph{nontrivially unstable} by Wilson in~\cite{Wilson2008} in a study of the stability of graphs. So the above definition of nontrivially unstable graph pairs generalizes the concept of nontrivially unstable graphs.

\begin{remark}
It can be easily shown that $\Gamma\times\Sigma$ is $R$-thin if and only if both $\Gamma$ and $\Sigma$ are $R$-thin (\cite[Lemma~2.3]{QXZ2019}, stated as Lemma \ref{cvn}(b) in the present paper), and $\Gamma\times\Sigma$ is connected if and only if both $\Gamma$ and $\Sigma$ are connected and at least one of them is non-bipartite (\cite[Theorem 5.9]{HIK2011}, stated as Lemma \ref{cvn}(a) in the present paper). Therefore, our definition of graph pairs $(\Gamma,\Sigma)$ being nontrivially unstable is equivalent to requiring that $\Gamma$ and $\Sigma$ are coprime graphs with $\Gamma\times\Sigma$ connected and $R$-thin.
\end{remark}

Needless to say, the following problem is of central importance to the study of the stability of graph pairs.

\begin{problem}\label{prob}
Characterize nontrivially unstable pairs of graphs $(\Gamma,\Sigma)$ with both $\Aut(\Sigma)$ and $\Aut(\Gamma)$ nontrivial.
\end{problem}



We will study this problem in the case when $\Gamma$ and $\Sigma$ are regular graphs of coprime valencies. Our study is motivated by orientably regular embeddings of the \emph{canonical double cover} $\Gamma\times K_2$ of a given graph $\Gamma$. More precisely, it was shown by Nedela and \v Skoviera~\cite{NS1996} that for any stable graph $\Gamma$ (that is, for any stable graph pair $(\Gamma,K_2)$), all orientably regular embeddings of $\Gamma\times K_2$ can be described in terms of orientably regular embeddings of $\Gamma$. As a natural extension, one would expect that for a stable pair $(\Gamma,\Sigma)$ of graphs we may be able to describe all orientably regular embeddings of $\Gamma\times\Sigma$ in terms of orientably regular embeddings of $\Gamma$ and $\Sigma$. In this regard it has been proved by Chen \cite{Chen} that, if $(\Gamma,\Sigma)$ is a stable pair of regular graphs such that $\Gamma\times\Sigma$ has an orientably regular embedding, then the valencies of $\Gamma$ and $\Sigma$ must be coprime. It is thus natural to impose the extra condition $\gcd(\val(\Gamma),\val(\Sigma))=1$ when studying Problem~\ref{prob}, and we will do so in this paper. Note that this condition is satisfied by $(\Gamma, K_2)$ for any regular graph $\Gamma$.

Let $\Gamma$ be a graph. A pair of permutations $(\alpha,\beta)$ of $V(\Gamma)$ is called a \emph{two-fold automorphism} of $\Gamma$ if for all $u,v\in V(\Gamma)$, $\{u,v\} \in E(\Gamma)$ if and only if $\{u^\alpha, v^\beta\} \in E(\Gamma)$. A two-fold automorphism $(\alpha,\beta)$ is said to be \emph{nontrivial} if in addition $\alpha\neq\beta$. It is proved in~\cite[Theorem~3.2]{LMS2015} that a graph is unstable if and only if it has a nontrivial two-fold automorphism. The second main result in our paper, Theorem~\ref{main-theorem2} below, generalizes this result to the setting of nontrvially unstable graph pairs when $\Gamma$ is regular. To present our result we need the following definition.

\begin{definition}
\label{def:sigma-auto}
Let $\Gamma$ and $\Sigma$ be graphs with $V(\Sigma)=\{1,\dots,n\}$, and let $\alpha_1,\dots,\alpha_n$ be permutations of $V(\Gamma)$. We say that the $n$-tuple $(\alpha_1,\dots,\alpha_n)$ is a \emph{$\Sigma$-automorphism} of $\Gamma$ if for all $u,v\in V(\Gamma)$, $\{u, v\} \in E(\Gamma)$ if and only if $\{u^{\alpha_i}, v^{\alpha_j}\} \in E(\Gamma)$ for all $i,j\in V(\Sigma)$ with $\{i, j\} \in E(\Sigma)$. Such a $\Sigma$-automorphism $(\alpha_1,\dots,\alpha_n)$ of $\Gamma$ is said to be \emph{nondiagonal} if there exists at least one pair of vertices $i,j\in V(\Sigma)$ such that $\alpha_i\neq\alpha_j$. 
\end{definition}

It is readily seen that a two-fold automorphism of a graph $\Gamma$ is exactly a $K_2$-automorphism of $\Gamma$, and a nontrivial two-fold automorphism of $\Gamma$ is precisely a nondiagonal $K_2$-automorphism of $\Gamma$. As a side note, we mention that the definition above applies when $\Gamma$ and $\Sigma$ are pseudographs. (A pseudograph is a graph in which both loops and multiple edges are permitted.) And we observe that, if $\Sigma$ is the pseudograph $K_{1}^{\circ}$ with only one vertex and one self-loop, then a $K_{1}^{\circ}$-automorphism of $\Gamma$ is an automorphism of $\Gamma$ in the usual sense. Nevertheless, in this paper we only consider the case when both $\Gamma$ and $\Sigma$ are simple graphs with order at least two.

\delete
{
As will be seen in the next section, all $\Sigma$-automorphisms of $\Gamma$ form a group under the coordinate-wise composition of permutations. 
}

The second main result in this paper, presented below, settles Problem \ref{prob} in the case when $\Gamma$ and $\Sigma$ are regular with coprime valencies and $\Sigma$ is vertex-transitive.

\begin{theorem}
\label{main-theorem2}
Let $\Gamma$ be a connected regular graph and $\Sigma$ a connected vertex-transitive graph such that $\val(\Gamma)$ and $\val(\Sigma)$ are coprime. Suppose that both $\Gamma$ and $\Sigma$ are $R$-thin and at least one of them is non-bipartite. Then $(\Gamma,\Sigma)$ is nontrvially unstable if and only if at least one $\Sigma$-automorphism of $\Gamma$ is nondiagonal.
\end{theorem}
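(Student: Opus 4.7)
The strategy is to establish a correspondence between automorphisms of $\Gamma\times\Sigma$ lying outside the natural image of $\Aut(\Gamma)\times\Aut(\Sigma)$ and nondiagonal $\Sigma$-automorphisms of $\Gamma$, with the ``column partition'' $\{C_i:=V(\Gamma)\times\{i\}\mid i\in V(\Sigma)\}$ of $V(\Gamma\times\Sigma)$ as the bridge. For the sufficiency direction, given a nondiagonal $\Sigma$-automorphism $(\alpha_1,\dots,\alpha_n)$ of $\Gamma$, I would define $\sigma\in\Sym(V(\Gamma)\times V(\Sigma))$ by $\sigma(u,i):=(u^{\alpha_i},i)$. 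Whenever $(u,i)\sim(v,j)$ in $\Gamma\times\Sigma$ we have $\{i,j\}\in E(\Sigma)$, and Definition~\ref{def:sigma-auto} yields $\{u,v\}\in E(\Gamma)\iff\{u^{\alpha_i},v^{\alpha_j}\}\in E(\Gamma)$, so $\sigma\in\Aut(\Gamma\times\Sigma)$. If $\sigma$ were the natural image of some $(\alpha,\beta)\in\Aut(\Gamma)\times\Aut(\Sigma)$, then comparing second coordinates forces $\beta=\mathrm{id}$ and comparing first coordinates forces $\alpha_i=\alpha$ for every $i$, contradicting nondiagonality; hence $(\Gamma,\Sigma)$ is unstable.

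For the necessity direction, take $\sigma\in\Aut(\Gamma\times\Sigma)$ outside the natural image. The central step is the following \emph{column lemma}: $\sigma$ permutes the columns setwise, i.e.\ there exists $\pi\in\Sym(V(\Sigma))$ with $\sigma(C_i)=C_{\pi(i)}$ for every $i$. Granted this, one can write $\sigma(u,i)=(\alpha_i(u),\pi(i))$ with each $\alpha_i$ a permutation of $V(\Gamma)$; a routine edge-check (applied to pairs $(u,i),(v,j)$ with $\{i,j\}\in E(\Sigma)$ and $v\in N_\Gamma(u)$) shows $\pi\in\Aut(\Sigma)$. Composing with $\tau\in\Aut(\Gamma)\times\Aut(\Sigma)$ given by $\tau(u,j):=(u,\pi^{-1}(j))$ produces $\sigma':=\tau\circ\sigma$ of the form $\sigma'(u,i)=(\alpha_i(u),i)$, still outside the natural image of $\Aut(\Gamma)\times\Aut(\Sigma)$. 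The edge-preserving property of $\sigma'$ then reads verbatim as Definition~\ref{def:sigma-auto}, so $(\alpha_1,\dots,\alpha_n)$ is a $\Sigma$-automorphism of $\Gamma$; and nondiagonality is forced because otherwise $\sigma'$ would equal $(\alpha,\mathrm{id})$ in the natural image.

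The main obstacle is the column lemma, whose proof must exploit both $\gcd(\val(\Gamma),\val(\Sigma))=1$ and the vertex-transitivity of $\Sigma$. Locally, for each $(u,i)$ the neighbourhood in $\Gamma\times\Sigma$ is the rectangle $N_\Gamma(u)\times N_\Sigma(i)$, which partitions canonically into $\val(\Sigma)$ column-slices of size $\val(\Gamma)$ or dually into $\val(\Gamma)$ row-slices of size $\val(\Sigma)$; coprimality of the valencies (which forces $\val(\Gamma)\neq\val(\Sigma)$ unless both are $1$, a case ruled out by connectedness of $\Gamma\times\Sigma$) prevents $\sigma$ from mixing these two partitions of the neighbourhood. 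Thus $\sigma$ either preserves the global column partition or maps it onto the row partition $\{R_u:=\{u\}\times V(\Sigma)\}$, the latter requiring $|V(\Gamma)|=|V(\Sigma)|$. The $R$-thin hypothesis and the assumption that at least one factor is non-bipartite (ensuring $\Gamma\times\Sigma$ is connected by Lemma~\ref{cvn}(a)), together with the vertex-transitivity of $\Sigma$, should let me propagate this local rigidity to every vertex and rule out the row-swap alternative. As a backup, one may invoke the unique prime factorization and weak reconstruction results for direct products of coprime graphs from~\cite{HIK2011}, together with Remark~\ref{coprime-valencies}, to pin down the $\Sigma$-factor in $\Gamma\times\Sigma$ up to an automorphism of $\Sigma$.
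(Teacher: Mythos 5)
Your architecture matches the paper's: both reduce the theorem to the ``column lemma'' that every element of $\Aut(\Gamma\times\Sigma)$ permutes the fibres $V(\Gamma)\times\{i\}$ (in the paper's notation, $\Aut(\Gamma\times\Sigma)=P(\Gamma,\Sigma)$), after which the translation between such automorphisms and $\Sigma$-automorphisms of $\Gamma$ is routine (Lemmas~\ref{NF}--\ref{nontrivial}). Your sufficiency direction is essentially Lemma~\ref{nontrivial}(a) and is fine once you add the observation that a finite group cannot be isomorphic to a proper subgroup of itself --- this is needed because stability is defined via an abstract isomorphism $\Aut(\Gamma\times\Sigma)\cong\Aut(\Gamma)\times\Aut(\Sigma)$, so exhibiting an automorphism outside the natural image only yields instability via a cardinality comparison.

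The genuine gap is that you do not prove the column lemma, and neither of the two ideas you offer for it works as stated. First, the claim that the neighbourhood rectangle $N_\Gamma(u)\times N_\Sigma(i)$ ``partitions canonically'' into column-slices and row-slices, and that coprimality of the valencies ``prevents $\sigma$ from mixing'' them, begs the question: those slices are not a priori recognisable from the abstract graph, and nothing in your sketch exhibits a graph-theoretic invariant distinguishing them, nor justifies the dichotomy that $\sigma$ must send the column partition either to itself or to the row partition. The paper has to build such invariants from scratch: the normalised common-neighbourhood function $f((u,i),(v,j))$, the sets $X(u,i)$ where $\val(\Sigma)\cdot f$ is an integer, and the maximiser sets $Y(u,i)$, which Lemma~\ref{vs2} identifies with $\{u\}\times N_{B(\Sigma)}(i)$ using coprimality; vertex-transitivity of $\Sigma$ enters through the stratification into $X_k$ and $Y_k$ (Lemmas~\ref{Xk} and~\ref{XY}), and one still needs the parity arguments of Lemmas~\ref{add2}--\ref{Sigma-bipartite} (even walks in $\Gamma$, odd cycles in $\Sigma$, or the bipartition of whichever factor is bipartite) to globalise the local rigidity. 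Second, your backup --- unique prime factorisation of direct products --- is unavailable in exactly the case that matters: when $\Gamma$ and $\Sigma$ are coprime and both non-bipartite, Lemma~\ref{coprime}(b) already gives stability, so nontrivial instability can only occur when exactly one factor is bipartite; but then $\Gamma\times\Sigma$ is bipartite by Lemma~\ref{cvn}(c), and the unique-factorisation theorem of \cite{HIK2011} does not apply to bipartite graphs. Indeed, the failure of unique factorisation for bipartite products is precisely the source of the phenomenon the theorem is describing, so no argument that relies on it can close the proof.
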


The rest of this paper is organized as follows. In the next section we will introduce the $\Sigma$-automorphism group of $\Gamma$ and two subgroups (see Definitions \ref{def:Q} and \ref{def:P} in the next section) of $\Aut(\Gamma\times\Sigma)$, and study connections between these groups and the stability of $(\Gamma,\Sigma)$. The proof of Theorem~\ref{thm1} will be given in Section~\ref{unstable}. In Section~\ref{GraphsCoprime}, we will prove a number of lemmas concerning pairs of graphs with coprime valencies. These lemmas will be used to prove Theorem~\ref{main-theorem2} in Section~\ref{ProofTh2}. We will conclude the paper with some remarks and questions in Section \ref{Open}.

\section{\texorpdfstring{$\Sigma$-automorphism group of $\Gamma$}{Sigma-automorphism group of Gamma}}\label{sec3}

Throughout this section $\Gamma$ and $\Sigma$ are graphs with $V(\Sigma)=\{1,\dots,n\}$ and $n>1$. 

\subsection{$\Aut_\Sigma(\Gamma)$, $P(\Gamma,\Sigma)$ and $Q(\Gamma,\Sigma)$}

It is not difficult to see that the set of all $\Sigma$-automorphisms of $\Gamma$ with multiplication defined by
\[
(\alpha_1, \dots, \alpha_n)(\beta_1, \dots, \beta_n)=(\alpha_1\beta_1, \dots, \alpha_n\beta_n)
\]
is a group. We call this group the \emph{$\Sigma$-automorphism group} of $\Gamma$ and denote it by $\Aut_\Sigma(\Gamma)$. Note that $(\alpha,\alpha,\dots, \alpha)\in\Aut_\Sigma(\Gamma)$ if and only if $\alpha\in\Aut(\Gamma)$.
Hence 
\begin{equation}
\label{subgp}
\Aut(\Gamma)\lesssim\Aut_\Sigma(\Gamma).
\end{equation}
Moreover,  
\begin{equation}\label{allequal}
\Aut(\Gamma)\cong\Aut_\Sigma(\Gamma) \Leftrightarrow \alpha_1=\cdots=\alpha_n\text{ for each }(\alpha_1,\dots,\alpha_n)\in\Aut_\Sigma(\Gamma).
\end{equation}

\begin{definition}
\label{def:Q}
Define
$$
Q(\Gamma,\Sigma) = \{\sigma \in \Aut(\Gamma\times\Sigma): (V(\Gamma)\times\{i\})^{\sigma} = V(\Gamma)\times\{i\} \text{ for each } i\in V(\Sigma)\}.
$$
\end{definition}

Note that $Q(\Gamma,\Sigma)$ is a subgroup of $\Aut(\Gamma\times\Sigma)$. Denote the projections from $V(\Gamma \times\Sigma)$ to $V(\Gamma)$ and $V(\Sigma)$ by $\pi_\Gamma$ and $\pi_\Sigma$, respectively. In other words, 
$$
(u, i)^{\pi_\Gamma} = u \text{ and } (u, i)^{\pi_\Sigma} = i, \text{ for } (u,i) \in V(\Gamma \times\Sigma).
$$  
  
\begin{lemma}\label{NF}
Let $\Gamma$ and $\Sigma$ be graphs. Then 
$$
Q(\Gamma,\Sigma) \cong \Aut_\Sigma(\Gamma).
$$
\end{lemma}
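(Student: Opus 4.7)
The plan is to exhibit an explicit isomorphism $\Phi\colon Q(\Gamma,\Sigma)\to\Aut_\Sigma(\Gamma)$. Given $\sigma\in Q(\Gamma,\Sigma)$, the defining property $(V(\Gamma)\times\{i\})^\sigma=V(\Gamma)\times\{i\}$ lets me define, for each $i\in V(\Sigma)$, a permutation $\alpha_i$ of $V(\Gamma)$ by $u^{\alpha_i}=((u,i)^\sigma)^{\pi_\Gamma}$, so that $(u,i)^\sigma=(u^{\alpha_i},i)$ for all $u\in V(\Gamma)$. I set $\Phi(\sigma):=(\alpha_1,\dots,\alpha_n)$. Conversely, any $n$-tuple $(\alpha_1,\dots,\alpha_n)$ of permutations of $V(\Gamma)$ determines a fiber-preserving bijection $\sigma_{(\alpha_i)}$ of $V(\Gamma)\times V(\Sigma)$ via $(u,i)\mapsto (u^{\alpha_i},i)$, so the content of the lemma is that $\sigma\leftrightarrow(\alpha_1,\dots,\alpha_n)$ sets up a bijection between $Q(\Gamma,\Sigma)$ and $\Aut_\Sigma(\Gamma)$.

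The first step is to verify that $\Phi(\sigma)\in\Aut_\Sigma(\Gamma)$. Unfolding the definition of $E(\Gamma\times\Sigma)$, for $u,v\in V(\Gamma)$ and $\{i,j\}\in E(\Sigma)$ we have $\{(u,i),(v,j)\}\in E(\Gamma\times\Sigma)$ iff $\{u,v\}\in E(\Gamma)$, while its $\sigma$-image $\{(u^{\alpha_i},i),(v^{\alpha_j},j)\}$ is in $E(\Gamma\times\Sigma)$ iff $\{u^{\alpha_i},v^{\alpha_j}\}\in E(\Gamma)$. Since $\sigma$ and $\sigma^{-1}$ both preserve edges of $\Gamma\times\Sigma$, both implications between these two membership conditions hold, which is exactly the condition in Definition~\ref{def:sigma-auto}.

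Next I would verify that $\Phi$ is a homomorphism. If $\sigma,\tau\in Q(\Gamma,\Sigma)$ correspond to $(\alpha_1,\dots,\alpha_n)$ and $(\beta_1,\dots,\beta_n)$ respectively, then $(u,i)^{\sigma\tau}=(u^{\alpha_i},i)^\tau=(u^{\alpha_i\beta_i},i)$, giving $\Phi(\sigma\tau)=(\alpha_1\beta_1,\dots,\alpha_n\beta_n)=\Phi(\sigma)\Phi(\tau)$, matching the coordinate-wise multiplication on $\Aut_\Sigma(\Gamma)$. Injectivity of $\Phi$ is immediate: if every $\alpha_i$ is the identity on $V(\Gamma)$, then $\sigma$ fixes every vertex of $\Gamma\times\Sigma$.

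For surjectivity, take any $(\alpha_1,\dots,\alpha_n)\in\Aut_\Sigma(\Gamma)$ and form the bijection $\sigma_{(\alpha_i)}$ as above. It preserves each fiber setwise by construction, so it remains only to show that $\sigma_{(\alpha_i)}\in\Aut(\Gamma\times\Sigma)$; but the two directions of the biconditional in Definition~\ref{def:sigma-auto} are precisely the statements that $\sigma_{(\alpha_i)}$ and its inverse send edges to edges, so $\sigma_{(\alpha_i)}\in Q(\Gamma,\Sigma)$ and $\Phi(\sigma_{(\alpha_i)})=(\alpha_1,\dots,\alpha_n)$. There is no serious obstacle: the proof is essentially a careful bookkeeping exercise, and the only point that requires attention is noticing that the bidirectional edge-preservation built into Definition~\ref{def:sigma-auto} is exactly what turns a candidate fiber-preserving bijection into a genuine automorphism of $\Gamma\times\Sigma$.
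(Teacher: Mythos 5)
Your proposal is correct and follows essentially the same route as the paper: both define the correspondence $\sigma\mapsto(\alpha_1,\dots,\alpha_n)$ via $(u,i)^\sigma=(u^{\alpha_i},i)$, check it is an injective homomorphism with respect to coordinate-wise multiplication, and establish both inclusions between the image and $\Aut_\Sigma(\Gamma)$ by unfolding the adjacency rule of $\Gamma\times\Sigma$. The only cosmetic difference is that the paper first maps into the full product $\Sym(V(\Gamma))\times\cdots\times\Sym(V(\Gamma))$ and then identifies the image, whereas you target $\Aut_\Sigma(\Gamma)$ directly; the substance is identical.
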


\begin{proof}
Set $V(\Sigma)=\{1,\dots,n\}$. Define
\[
f\colon Q(\Gamma,\Sigma)\rightarrow\Sym(V(\Gamma))\times\cdots\times\Sym(V(\Gamma)), \quad \sigma\mapsto (\alpha_1, \dots, \alpha_n)
\]
such that $(u,i)^{\sigma\pi_\Gamma}=(u,i)^{\pi_\Gamma\alpha_i}$ for each $(u,i)\in V(\Gamma\times\Sigma)$.
For any $\sigma, \tau \in Q(\Gamma,\Sigma)$, let $f(\sigma)=(\alpha_1, \dots, \alpha_n)$ and
$f(\tau)=(\beta_1, \dots, \beta_n)$. It is straightforward to verify that $(u,i)^{\sigma \tau \pi_\Gamma}=(u,i)^{\pi_\Gamma(\alpha_i\beta_i)}$
for each $(u,i)\in V(\Gamma\times\Sigma)$. Hence
\begin{align*}
f(\sigma)f(\tau)&=(\alpha_1, \dots, \alpha_n)(\beta_1, \dots, \beta_n)\\
&=(\alpha_1\beta_1, \dots, \alpha_n\beta_n)\\
&=f(\sigma \tau).
\end{align*}
Thus $f$ is a group homomorphism. Since $f$ is injective, we have $Q(\Gamma,\Sigma)\cong f(Q(\Gamma,\Sigma))$.

Let $(\alpha_1, \dots, \alpha_n)\in\Aut_\Sigma(\Gamma)$. Define $\sigma$ by $(u, i)^{\sigma}=(u^{\alpha_i}, i)$ for all $i\in V(\Sigma)$. Then $\sigma\in Q(\Gamma,\Sigma)$ and $f(\sigma)=(\alpha_1, \dots, \alpha_n)$. Thus $\Aut_\Sigma(\Gamma)\subseteq f(Q(\Gamma,\Sigma))$.

Conversely, let $\sigma\in Q(\Gamma,\Sigma)$ and $f(\sigma)=(\alpha_1, \dots, \alpha_n)$. Then for any $i,j\in V(\Sigma)$ with $\{i, j\} \in E(\Sigma)$, we have
\begin{align*}
\{u, v\}\in E(\Gamma) & \Leftrightarrow \{(u,i), (v, j)\}\in E(\Gamma\times\Sigma)\\
& \Leftrightarrow \{(u,i)^{\sigma}, (v, j)^{\sigma}\}=\{(u^{\alpha_i}, i), (v^{\alpha_j}, j)\}\in E(\Gamma\times\Sigma)\\
& \Leftrightarrow \{u^{\alpha_i}, v^{\alpha_j}\}\in E(\Gamma),
\end{align*}
whence $\{u, v\} \in E(\Gamma)$ if and only if $\{u^{\alpha_i}, v^{\alpha_j}\} \in E(\Gamma)$. Thus $f(\sigma)=(\alpha_1, \dots, \alpha_n)\in\Aut_\Sigma(\Gamma)$, and so $f(Q(\Gamma,\Sigma))\subseteq\Aut_\Sigma(\Gamma)$. Therefore, $Q(\Gamma,\Sigma)\cong f(Q(\Gamma,\Sigma))=\Aut_\Sigma(\Gamma)$, completing the proof.
\end{proof}

Obviously, $\{V(\Gamma)\times\{i\}\mid i\in V(\Sigma)\}$ is a partition of $V(\Gamma\times\Sigma)$. 

\begin{definition}
\label{def:P}
Define $P(\Gamma,\Sigma)$ to be the set of elements of $\Aut(\Gamma\times\Sigma)$ that leave the partition $\{V(\Gamma)\times\{i\}\mid i\in V(\Sigma)\}$ invariant. 
\end{definition}

That is, $P(\Gamma,\Sigma)$ is the setwise stabilizer of $\{V(\Gamma)\times\{i\}\mid i\in V(\Sigma)\}$ under $\Aut(\Gamma\times\Sigma)$. Hence it is a subgroup of $\Aut(\Gamma\times\Sigma)$. Of course $P(\Gamma,\Sigma)$ induces an action on $\{V(\Gamma)\times\{i\}\mid i\in V(\Sigma)\}$, and the kernel of this action is exactly $Q(\Gamma,\Sigma)$. Hence 
$$
P(\Gamma,\Sigma)/Q(\Gamma,\Sigma)\cong\Aut(\Sigma).
$$ 
This together with Lemma~\ref{NF} implies the following result. 
 
\begin{lemma}\label{stable-NF}
Let $\Gamma$ and $\Sigma$ be graphs. Then 
$$
P(\Gamma,\Sigma)\cong\Aut_\Sigma(\Gamma)\rtimes\Aut(\Sigma).
$$
\end{lemma}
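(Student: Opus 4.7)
The plan is to exhibit $P(\Gamma,\Sigma)$ as the internal semidirect product of $Q(\Gamma,\Sigma)$ and a complement isomorphic to $\Aut(\Sigma)$, and then invoke Lemma~\ref{NF}. The discussion immediately preceding the lemma already asserts the short exact sequence
\begin{equation*}
1 \longrightarrow Q(\Gamma,\Sigma) \longrightarrow P(\Gamma,\Sigma) \stackrel{\varphi}{\longrightarrow} \Aut(\Sigma) \longrightarrow 1,
\end{equation*}
where $\varphi$ sends $\sigma\in P(\Gamma,\Sigma)$ to the permutation $\bar\sigma$ of $V(\Sigma)$ determined by $(V(\Gamma)\times\{i\})^\sigma = V(\Gamma)\times\{i^{\bar\sigma}\}$. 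I would first briefly verify this: the kernel of $\varphi$ is $Q(\Gamma,\Sigma)$ by definition, while $\bar\sigma\in\Aut(\Sigma)$ follows by picking any edge $\{u,v\}\in E(\Gamma)$ and applying $\sigma$ and $\sigma^{-1}$ to the edge $\{(u,i),(v,j)\}\in E(\Gamma\times\Sigma)$ for $\{i,j\}\in E(\Sigma)$.

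The key step is to construct an explicit splitting $s:\Aut(\Sigma)\to P(\Gamma,\Sigma)$ of $\varphi$. For $\beta\in\Aut(\Sigma)$, define $s(\beta)$ on $V(\Gamma\times\Sigma)$ by $(u,i)^{s(\beta)} = (u, i^\beta)$. A direct calculation shows $s(\beta)\in\Aut(\Gamma\times\Sigma)$:
\begin{align*}
\{(u,i),(v,j)\}\in E(\Gamma\times\Sigma) &\Leftrightarrow \{u,v\}\in E(\Gamma) \text{ and } \{i,j\}\in E(\Sigma)\\
&\Leftrightarrow \{u,v\}\in E(\Gamma) \text{ and } \{i^\beta,j^\beta\}\in E(\Sigma)\\
&\Leftrightarrow \{(u,i)^{s(\beta)},(v,j)^{s(\beta)}\}\in E(\Gamma\times\Sigma),
\end{align*}
where the second equivalence uses $\beta\in\Aut(\Sigma)$. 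Clearly $s(\beta)\in P(\Gamma,\Sigma)$, and from $(u,i)^{s(\beta_1)s(\beta_2)}=(u,i^{\beta_1\beta_2})$ it is immediate that $s$ is a group homomorphism with $\varphi\circ s = \operatorname{id}_{\Aut(\Sigma)}$. In particular, this also gives the surjectivity of $\varphi$ onto $\Aut(\Sigma)$ for free.

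Combining the splitting with the image-kernel identification gives $P(\Gamma,\Sigma) = Q(\Gamma,\Sigma)\rtimes s(\Aut(\Sigma))$, and Lemma~\ref{NF} then replaces $Q(\Gamma,\Sigma)$ by $\Aut_\Sigma(\Gamma)$ to yield $P(\Gamma,\Sigma) \cong \Aut_\Sigma(\Gamma)\rtimes\Aut(\Sigma)$. No step here is genuinely difficult; the only real insight required is to write down the natural lifting $s$ of an automorphism of $\Sigma$ to an automorphism of $\Gamma\times\Sigma$ acting only on the second coordinate. The main thing to be careful about is checking that the isomorphism on the right-hand side records the correct action of $\Aut(\Sigma)$ on $\Aut_\Sigma(\Gamma)$, which under the identifications above is the coordinate-permutation action $(\alpha_1,\dots,\alpha_n)^\beta = (\alpha_{1^{\beta^{-1}}},\dots,\alpha_{n^{\beta^{-1}}})$ induced by conjugation by $s(\beta)$.
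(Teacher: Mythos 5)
Your proof is correct and takes essentially the same approach as the paper: identify $Q(\Gamma,\Sigma)$ as the kernel of the action of $P(\Gamma,\Sigma)$ on the partition $\{V(\Gamma)\times\{i\}\mid i\in V(\Sigma)\}$, so that $P(\Gamma,\Sigma)/Q(\Gamma,\Sigma)\cong\Aut(\Sigma)$, and then apply Lemma~\ref{NF}. The only difference is that you explicitly construct the splitting $s(\beta)\colon (u,i)\mapsto (u,i^{\beta})$ witnessing the complement, a detail the paper leaves implicit when it passes from the quotient isomorphism to the semidirect product decomposition.
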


\subsection{$P(\Gamma,\Sigma)$ and the stability of $(\Gamma,\Sigma)$}

\begin{lemma}
\label{notP}
Let $\Gamma$ and $\Sigma$ be graphs. If $(\Gamma,\Sigma)$ is stable, then 
$$
\Aut(\Gamma\times\Sigma)=P(\Gamma,\Sigma).
$$
\end{lemma}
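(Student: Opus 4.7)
The plan is to prove equality by a simple order comparison, exploiting the fact that $P(\Gamma,\Sigma)$ is already large enough to account for the entire stable $\Aut(\Gamma\times\Sigma)$.

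First I would observe that the natural embedding $\Aut(\Gamma)\times\Aut(\Sigma)\lesssim\Aut(\Gamma\times\Sigma)$ from \eqref{eq1} is realized by the coordinate-wise action $(u,i)\mapsto(u^\alpha,i^\beta)$, which sends the fiber $V(\Gamma)\times\{i\}$ to the fiber $V(\Gamma)\times\{i^\beta\}$. In particular, the image of this embedding is contained in $P(\Gamma,\Sigma)$, giving the chain
\[
\Aut(\Gamma)\times\Aut(\Sigma)\;\lesssim\;P(\Gamma,\Sigma)\;\leq\;\Aut(\Gamma\times\Sigma).
\]

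Next, I would invoke Lemma~\ref{stable-NF} to compute
\[
|P(\Gamma,\Sigma)|=|\Aut_\Sigma(\Gamma)|\cdot|\Aut(\Sigma)|,
\]
and then use \eqref{subgp}, which gives $|\Aut_\Sigma(\Gamma)|\geq|\Aut(\Gamma)|$. Combining these inequalities yields
\[
|P(\Gamma,\Sigma)|\;\geq\;|\Aut(\Gamma)|\cdot|\Aut(\Sigma)|.
\]

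Finally, stability means $\Aut(\Gamma\times\Sigma)\cong\Aut(\Gamma)\times\Aut(\Sigma)$, so $|\Aut(\Gamma\times\Sigma)|=|\Aut(\Gamma)|\cdot|\Aut(\Sigma)|$. The displayed inequality then reads $|P(\Gamma,\Sigma)|\geq|\Aut(\Gamma\times\Sigma)|$, while the reverse inequality holds because $P(\Gamma,\Sigma)$ is a subgroup. Since all groups involved are finite, equality of orders forces $P(\Gamma,\Sigma)=\Aut(\Gamma\times\Sigma)$.

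There is no real obstacle here: the argument is a short counting argument that simply repackages Lemma~\ref{stable-NF} and \eqref{subgp}. The only point to verify carefully is that the embedding furnished by \eqref{eq1} really does land inside $P(\Gamma,\Sigma)$, which is immediate from the coordinate-wise description of the action.
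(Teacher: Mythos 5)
Your proof is correct and is essentially the paper's own argument: both establish the chain $\Aut(\Gamma)\times\Aut(\Sigma)\lesssim P(\Gamma,\Sigma)\leqslant\Aut(\Gamma\times\Sigma)$ via \eqref{subgp} and Lemma~\ref{stable-NF}, then conclude by comparing orders under the stability hypothesis. No substantive difference.
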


\begin{proof}
By \eqref{subgp} and Lemma~\ref{stable-NF}, we have
$$
\Aut(\Gamma)\times\Aut(\Sigma)\lesssim\Aut_\Sigma(\Gamma)\rtimes\Aut(\Sigma)\cong P(\Gamma,\Sigma)\leqslant\Aut(\Gamma\times\Sigma).
$$
Thus, if $(\Gamma,\Sigma)$ is stable, then $\Aut(\Gamma\times\Sigma)=P(\Gamma,\Sigma)$.
\end{proof}
 
\begin{lemma}
\label{nontrivial}
Let $\Gamma$ and $\Sigma$ be graphs, where $V(\Sigma)=\{1,\dots,n\}$. 
\begin{enumerate}[\rm (a)]
\item If at least one $\Sigma$-automorphism of $\Gamma$ is nondiagonal, then $(\Gamma,\Sigma)$ is unstable.
\item If $\Aut(\Gamma\times\Sigma)=P(\Gamma,\Sigma)$, then $(\Gamma,\Sigma)$ is unstable if and only if at least one $\Sigma$-automorphism of $\Gamma$ is nondiagonal.
\end{enumerate}
\end{lemma}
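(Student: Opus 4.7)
For part (a), the plan is to exploit the chain of subgroups arising from Lemma~\ref{NF} and Lemma~\ref{stable-NF}. Given a nondiagonal $\Sigma$-automorphism $(\alpha_1,\dots,\alpha_n)$ of $\Gamma$, the equivalence in~\eqref{allequal} tells us that $\Aut(\Gamma)$ is properly smaller than $\Aut_\Sigma(\Gamma)$. Combining this with Lemma~\ref{stable-NF} and the containment $P(\Gamma,\Sigma)\leqslant\Aut(\Gamma\times\Sigma)$ yields
\[
|\Aut(\Gamma)\times\Aut(\Sigma)| < |\Aut_\Sigma(\Gamma)|\cdot|\Aut(\Sigma)| = |P(\Gamma,\Sigma)| \leq |\Aut(\Gamma\times\Sigma)|.
\]
This strict inequality of orders rules out any isomorphism $\Aut(\Gamma\times\Sigma)\cong\Aut(\Gamma)\times\Aut(\Sigma)$, and hence $(\Gamma,\Sigma)$ is unstable.

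For part (b), the backward implication is exactly part (a), so the plan is to prove the forward direction by its contrapositive: if every $\Sigma$-automorphism of $\Gamma$ is diagonal, then $(\Gamma,\Sigma)$ is stable. The equivalence~\eqref{allequal} identifies $\Aut_\Sigma(\Gamma)$ with $\Aut(\Gamma)$ via $\alpha\mapsto(\alpha,\dots,\alpha)$. The crucial step is to argue that under this identification the action of $\Aut(\Sigma)$ on $\Aut_\Sigma(\Gamma)$ giving the semidirect product in Lemma~\ref{stable-NF} becomes trivial. I would verify this by unpacking the proof of Lemma~\ref{stable-NF}: a natural lift of $\beta\in\Aut(\Sigma)$ into $P(\Gamma,\Sigma)$ acts on $V(\Gamma\times\Sigma)$ by $(u,i)\mapsto(u,i^\beta)$, and a direct calculation shows that conjugation by this lift sends the tuple $(\alpha_1,\dots,\alpha_n)$ to $(\alpha_{1^{\beta^{-1}}},\dots,\alpha_{n^{\beta^{-1}}})$, which fixes any diagonal tuple. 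Consequently the semidirect product collapses to a direct product, so
\[
\Aut(\Gamma\times\Sigma) = P(\Gamma,\Sigma) \cong \Aut_\Sigma(\Gamma)\times\Aut(\Sigma) \cong \Aut(\Gamma)\times\Aut(\Sigma),
\]
which gives stability of $(\Gamma,\Sigma)$.

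The main technical point is the verification that $\Aut(\Sigma)$ acts on $\Aut_\Sigma(\Gamma)$ by coordinate permutation, which is the structural fact needed to collapse the semidirect product to a direct product in part (b). Once this is secured, both parts of the lemma reduce to a cardinality comparison together with the structure theorem Lemma~\ref{stable-NF}; no further ingredient about $\Gamma$ or $\Sigma$ individually is required.
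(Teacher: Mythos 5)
Your proposal is correct and takes essentially the same route as the paper: part (a) is the identical order-comparison argument via \eqref{subgp}, \eqref{allequal} and Lemma~\ref{stable-NF}, and part (b) likewise reduces to deciding whether $\Aut_\Sigma(\Gamma)\cong\Aut(\Gamma)$ inside the decomposition $P(\Gamma,\Sigma)\cong\Aut_\Sigma(\Gamma)\rtimes\Aut(\Sigma)$. The only difference is that you make explicit why the semidirect product collapses to a direct product when every $\Sigma$-automorphism is diagonal (conjugation by the natural lift of $\beta\in\Aut(\Sigma)$ permutes the coordinates of a tuple and hence fixes diagonal tuples), a step the paper leaves implicit when it asserts that $(\Gamma,\Sigma)$ is unstable if and only if $\Aut_\Sigma(\Gamma)\ncong\Aut(\Gamma)$; spelling this out is a small but genuine tightening of the argument.
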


\begin{proof}
(a) Suppose that there is at least one nondiagonal $\Sigma$-automorphism of $\Gamma$. Then we see from \eqref{subgp} and \eqref{allequal} that $\Aut(\Gamma)$ is isomorphic to a proper subgroup of $\Aut_\Sigma(\Gamma)$. Combining this with Lemma~\ref{stable-NF}, we obtain
\[
|\Aut(\Gamma)\times\Aut(\Sigma)|<|\Aut_\Sigma(\Gamma)\rtimes\Aut(\Sigma)|=|P(\Gamma,\Sigma)|\leqslant|\Aut(\Gamma\times\Sigma)|.
\]
Thus $\Aut(\Gamma\times\Sigma)\ncong\Aut(\Gamma)\times\Aut(\Sigma)$ and so $(\Gamma,\Sigma)$ is unstable.

(b) Suppose that $\Aut(\Gamma\times\Sigma)=P(\Gamma,\Sigma)$. Then $\Aut(\Gamma\times\Sigma)\cong\Aut_\Sigma(\Gamma)\rtimes\Aut(\Sigma)$ by Lemma \ref{stable-NF}. Thus $(\Gamma,\Sigma)$ is unstable if and only if $\Aut_\Sigma(\Gamma)\ncong \Aut(\Gamma)$, which, by \eqref{subgp} and \eqref{allequal}, is true if and only if at least one $\Sigma$-automorphism of $\Gamma$ is nondiagonal.
\end{proof}

\section{Proof of Theorem~\ref{thm1}}\label{unstable}

\subsection{Preparation}

We need a few lemmas before we can prove Theorem~\ref{thm1}. First, the following known results will be used in our proofs of Theorems \ref{thm1} and \ref{main-theorem2}. 

\begin{lemma}
\label{cvn}
Let $\Gamma$ and $\Sigma$ be graphs. 
\begin{enumerate}[{\rm (a)}]
\item If $\Gamma$ and $\Sigma$ are connected with order at least $2$, then $\Gamma \times \Sigma$ is connected if at least one of $\Gamma$ and $\Sigma$ is non-bipartite, and $\Gamma \times \Sigma$ has exactly two components if both $\Gamma$ and $\Sigma$ are bipartite (\cite{Weichsel62}; see also \cite[Theorem~5.9]{HIK2011}).
\item $\Gamma \times \Sigma$ is $R$-thin if and only if both $\Gamma$ and $\Sigma$ are $R$-thin (\cite[Lemma~2.3]{QXZ2019}).
\item $\Gamma \times \Sigma$ is non-bipartite if and only if both $\Gamma$ and $\Sigma$ are non-bipartite (\cite[Exercise 8.13]{HIK2011}). 
\end{enumerate}
\end{lemma}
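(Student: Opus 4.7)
The lemma consists of three standard facts about direct products, each of which I would prove via walk and neighborhood analysis rather than by citing the references verbatim.

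For part (a), the core observation is that a walk $(u_0,i_0),(u_1,i_1),\dots,(u_k,i_k)$ in $\Gamma\times\Sigma$ projects coordinatewise to walks of the same length $k$ in $\Gamma$ and $\Sigma$, and conversely any pair of such walks lifts to one in $\Gamma\times\Sigma$. So $(u,i)$ and $(v,j)$ lie in the same component of $\Gamma\times\Sigma$ iff there is a common length $k$ realizing a walk $u\to v$ in $\Gamma$ and $i\to j$ in $\Sigma$. When at least one factor, say $\Gamma$, is non-bipartite, the presence of an odd closed walk lets one obtain walks of either parity between any two vertices of $\Gamma$, which can then be matched to the parity of any walk in $\Sigma$; this gives connectedness. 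When both factors are bipartite with bipartitions $(A_1,A_2)$ and $(B_1,B_2)$, parity bookkeeping shows that the two components are precisely $(A_1\times B_1)\cup(A_2\times B_2)$ and $(A_1\times B_2)\cup(A_2\times B_1)$.

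For part (b), the key identity is $N_{\Gamma\times\Sigma}((u,i))=N_\Gamma(u)\times N_\Sigma(i)$, read straight off the definition of the direct product. Assuming positive valencies so that both sides are nonempty, two such Cartesian products are equal iff their factors are equal. Hence $(u,i)$ and $(v,j)$ have the same neighborhood in $\Gamma\times\Sigma$ iff $N_\Gamma(u)=N_\Gamma(v)$ and $N_\Sigma(i)=N_\Sigma(j)$. From this the stated equivalence is immediate: a failure of $R$-thinness in either factor produces distinct vertices of $\Gamma\times\Sigma$ with identical neighborhoods, and conversely any two such vertices in the product must witness a failure in at least one factor.

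For part (c), any odd closed walk in $\Gamma\times\Sigma$ projects to odd closed walks in both $\Gamma$ and $\Sigma$, so if either factor is bipartite, so is the product. Conversely, given odd closed walks in $\Gamma$ and $\Sigma$ of lengths $\ell_\Gamma$ and $\ell_\Sigma$, one can pad either walk by traversing an incident edge back and forth to add $2$ to its length, equalizing the two lengths while preserving parity; the resulting pair lifts to an odd closed walk in $\Gamma\times\Sigma$.

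Overall I expect no substantial obstacle: all three parts reduce to routine walk/neighborhood calculations. The only point requiring some care is the bipartite case of (a), where the explicit description of the two components must be verified, but the parity bookkeeping is elementary once the projection principle is in hand.
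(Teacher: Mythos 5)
Your arguments are correct, but note that the paper offers no proof of this lemma at all: each part is simply quoted from the literature (\cite{Weichsel62} and \cite[Theorem~5.9]{HIK2011} for (a), \cite[Lemma~2.3]{QXZ2019} for (b), \cite[Exercise 8.13]{HIK2011} for (c)), so the comparison is between your self-contained argument and a citation. What you write is the standard argument underlying those sources: the projection/lifting principle for walks together with parity bookkeeping handles (a) and (c), and the factorization $N_{\Gamma\times\Sigma}((u,i))=N_\Gamma(u)\times N_\Sigma(i)$ handles (b). Two points deserve emphasis. First, your caveat in (b) about positive valencies is not cosmetic: if a factor has an isolated vertex, the product acquires several vertices with empty neighbourhood and the ``if'' direction fails (e.g.\ $K_1\times K_2$ is $R$-thick although both factors are $R$-thin), so the lemma must be read under the implicit standing assumption that the graphs in play have no isolated vertices; your added hypothesis is the honest way to make the claim true as a free-standing statement. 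Second, in (a) matching \emph{parities} of walks is not yet matching \emph{lengths}; one also needs the padding observation that a walk of length $m\geqslant 1$ between two vertices yields one of length $m+2$, so that the achievable lengths form a cofinite subset of the integers of the relevant parity and a common length can be chosen. You invoke exactly this device in (c), so the idea is present in your write-up; it should be invoked in (a) as well to close the argument. With those two details made explicit, your proofs are complete and arguably more useful to the reader than the bare citations.
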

 
\begin{lemma}\label{VD}
Let $\Gamma$ and $\Sigma$ be graphs. If $(\Gamma,\Sigma)$ is stable, then both $\Gamma$ and $\Sigma$ are $R$-thin.
\end{lemma}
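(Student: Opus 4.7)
The approach is to prove the contrapositive: if at least one of $\Gamma$ and $\Sigma$ is $R$-thick, then $(\Gamma,\Sigma)$ is unstable. By the symmetry of the statement in $\Gamma$ and $\Sigma$, I may assume without loss of generality that there exist distinct vertices $u,v\in V(\Gamma)$ with $N_\Gamma(u)=N_\Gamma(v)$. The transposition $\tau=(u\,v)$ then lies in $\Aut(\Gamma)$, since swapping two vertices with identical neighborhoods preserves adjacency.

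The main step is to produce a nondiagonal $\Sigma$-automorphism of $\Gamma$ and invoke Lemma~\ref{nontrivial}(a). Writing $V(\Sigma)=\{1,\dots,n\}$ with $n\geq 2$, fix any $i\in V(\Sigma)$ and define the tuple $(\alpha_1,\dots,\alpha_n)$ by $\alpha_i=\tau$ and $\alpha_\ell=\mathrm{id}$ for all $\ell\in V(\Sigma)\setminus\{i\}$. The tuple is nondiagonal because $n\geq 2$ supplies some $\ell\neq i$ with $\alpha_i=\tau\neq\mathrm{id}=\alpha_\ell$. To verify the defining condition in Definition~\ref{def:sigma-auto}, take any edge $\{j,k\}\in E(\Sigma)$; since $\Sigma$ is simple we have $j\neq k$, so at most one of $\alpha_j,\alpha_k$ equals $\tau$. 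The case $\alpha_j=\alpha_k=\mathrm{id}$ is trivial, while the case (say) $\alpha_j=\tau$, $\alpha_k=\mathrm{id}$ asks that $\{w,w'\}\in E(\Gamma)\Leftrightarrow\{w^\tau,w'\}\in E(\Gamma)$ for all $w,w'\in V(\Gamma)$. Since $\tau\in\Aut(\Gamma)$ we have $\{w,w'\}\in E(\Gamma)\Leftrightarrow\{w^\tau,w'^\tau\}\in E(\Gamma)$, and the hypothesis $N_\Gamma(u)=N_\Gamma(v)$ implies that $w'$ and $w'^\tau$ have identical $\Gamma$-neighborhoods (they are equal unless $w'\in\{u,v\}$, in which case $\{w',(w')^\tau\}=\{u,v\}$ and equality of neighborhoods is exactly the hypothesis), so the equivalence passes through.

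Applying Lemma~\ref{nontrivial}(a) to the nondiagonal $\Sigma$-automorphism just constructed yields that $(\Gamma,\Sigma)$ is unstable, completing the proof of the contrapositive. No genuine obstacle arises: the bulk of the work was already done in Section~2 (in particular in the chain $\Aut(\Gamma)\times\Aut(\Sigma)\lesssim P(\Gamma,\Sigma)\leqslant\Aut(\Gamma\times\Sigma)$ underlying Lemma~\ref{nontrivial}(a)), and the only thing to watch is the verification that $\tau$, when inserted in a single coordinate, still preserves adjacency in $\Gamma$; this is immediate from $\tau\in\Aut(\Gamma)$ combined with $N_\Gamma(u)=N_\Gamma(v)$.
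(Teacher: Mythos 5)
Your proof is correct and follows essentially the same route as the paper: prove the contrapositive by taking distinct vertices $u,v$ with $N_\Gamma(u)=N_\Gamma(v)$, placing the transposition $(u\,v)$ in a single coordinate of a tuple with identities elsewhere to obtain a nondiagonal $\Sigma$-automorphism, and invoking Lemma~\ref{nontrivial}(a). The only difference is that you spell out the verification that this tuple satisfies Definition~\ref{def:sigma-auto}, which the paper leaves as an assertion.
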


\begin{proof}
We prove the contrapositive of the statement. Suppose that at least one of $\Gamma$ and $\Sigma$ is $R$-thick. Without loss of generality we may assume that $\Gamma$ is $R$-thick. Then there exist two distinct vertices $u$ and $v$ of $\Gamma$ such that $N_{\Gamma}(u) = N_{\Gamma}(v)$. Let $\alpha$ be the permutation of $V(\Gamma)$ which swaps $u$ and $v$ and fixes each vertex in $V(\Gamma) \setminus \{u, v\}$, and let $\tau=(\alpha, 1, 1, \dots, 1)$. Then $\tau$ is a $\Sigma$-automorphism of $\Gamma$. Moreover, $\tau$ is nondiagonal as $\alpha\neq1$. Thus, by Lemma \ref{nontrivial}(a), $(\Gamma,\Sigma)$ is unstable.  
\end{proof}

\delete
{
Let $\Gamma$ and $\Sigma$ be graphs. It can be proved that, if $\Gamma$ contains a connected component $\Gamma_1$ such that $(\Gamma_1, \Sigma)$ is unstable, then $(\Gamma, \Sigma)$ is unstable. In fact, for any $\alpha \in \Aut(\Gamma_1\times\Sigma) \setminus (\Aut(\Gamma_1)\times\Aut(\Sigma))$, the permutation of $V(\Gamma\times\Sigma)$ which agrees with $\alpha$ on $V(\Gamma_1\times\Sigma)$ and fixes each vertex in $V(\Gamma\times\Sigma) \setminus V(\Gamma_1\times\Sigma )$ is in $\Aut(\Gamma\times \Sigma) \setminus (\Aut(\Gamma)\times\Aut(\Sigma))$.

\begin{lemma}\label{disconnected2}
Let $\Sigma$ be a connected graph and $\Gamma$ a disconnected graph. If $\Gamma$ contains a connected component $\Gamma_1$ such that $(\Gamma_1, \Sigma)$ is unstable, then $(\Gamma, \Sigma)$ is unstable. \zhou{It seems that this lemma is not used. Also, from the proof it seems that the result is also true if $\Sigma$ is disconnected.}
\end{lemma}

\begin{proof}
Since $(\Gamma_1, \Sigma)$ is unstable, we have $\Aut(\Gamma_1)\times\Aut(\Sigma)<\Aut(\Gamma_1\times\Sigma)$. So there exists $\alpha\in \Aut(\Gamma_1\times\Sigma)$ such that $\alpha\notin\Aut(\Gamma_1)\times\Aut(\Sigma)$. Let $\beta$ be the permutation of $V(\Gamma\times\Sigma)$ which fixes each vertex in $V(\Gamma\times\Sigma)\setminus V(\Gamma_1\times\Sigma )$ and agrees with $\alpha$ on $V(\Gamma_1\times\Sigma)$. It is straightforward to verify that $\beta\in\Aut(\Gamma\times \Sigma)$. On the other hand, we have $\beta\notin\Aut(\Gamma)\times\Aut(\Sigma)$ as $\alpha\notin\Aut(\Gamma_1)\times\Aut(\Sigma)$. Thus $(\Gamma, \Sigma)$ is unstable.
\end{proof}
}

\begin{lemma}
\label{disconnected}
Let $\Gamma$ and $\Sigma$ be graphs. If one of them is disconnected and the other has a nontrivial automorphism group, then $(\Gamma,\Sigma)$ is unstable.
\end{lemma}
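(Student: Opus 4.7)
The plan is to produce a nondiagonal $\Gamma$-automorphism of $\Sigma$, apply Lemma~\ref{nontrivial}(a) with the roles of the two graphs swapped, and then invoke the symmetry ``$(\Gamma,\Sigma)$ stable iff $(\Sigma,\Gamma)$ stable'' noted right after the definition of a stable graph pair. Without loss of generality I shall assume that $\Gamma$ is the disconnected one and that $\Aut(\Sigma)$ is nontrivial. Fix a connected component $\Gamma_1$ of $\Gamma$ and a nontrivial element $\beta\in\Aut(\Sigma)$.

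For each $u\in V(\Gamma)$, I set $\alpha_u=\beta$ when $u\in V(\Gamma_1)$ and let $\alpha_u$ be the identity permutation of $V(\Sigma)$ otherwise. To verify that the tuple $(\alpha_u)_{u\in V(\Gamma)}$ is a $\Gamma$-automorphism of $\Sigma$ in the sense of Definition~\ref{def:sigma-auto}, I shall use the key observation that any edge $\{u,v\}$ of $\Gamma$ has both endpoints in the same connected component; this forces $\alpha_u=\alpha_v$, and the common value is either $\beta$ or the identity, both of which lie in $\Aut(\Sigma)$. Hence for every edge $\{u,v\}$ of $\Gamma$ and every pair $x,y\in V(\Sigma)$ one has $\{x,y\}\in E(\Sigma)$ if and only if $\{x^{\alpha_u},y^{\alpha_v}\}\in E(\Sigma)$, as required. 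The tuple is nondiagonal because $\Gamma$ being disconnected guarantees $V(\Gamma)\setminus V(\Gamma_1)\neq\emptyset$, so some $\alpha_u$ is the identity while others equal $\beta\neq 1$.

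Applying Lemma~\ref{nontrivial}(a) with $\Sigma$ playing the role of ``$\Gamma$'' and $\Gamma$ playing the role of ``$\Sigma$'' will then yield that $(\Sigma,\Gamma)$ is unstable, and by the symmetry of stability this gives that $(\Gamma,\Sigma)$ is unstable. I do not expect any genuine obstacle here; the only point needing attention is that the disconnectedness of $\Gamma$ is precisely what allows the permutations $\alpha_u$ to differ across components without violating the edge condition of Definition~\ref{def:sigma-auto}, since edges of $\Gamma$ never cross between the ``$\Gamma_1$'' and ``non-$\Gamma_1$'' parts.
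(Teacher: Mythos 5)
Your argument is correct in substance and, interestingly, it manufactures exactly the same permutation of $V(\Gamma\times\Sigma)$ as the paper does---the map fixing $V(\Gamma\times\Sigma)\setminus V(\Gamma_1\times\Sigma)$ pointwise and sending $(u,i)$ to $(u,i^\beta)$ for $u\in V(\Gamma_1)$---but packages it differently. The paper exhibits this permutation directly as an element of $\Aut(\Gamma\times\Sigma)\setminus P(\Gamma,\Sigma)$ and concludes via Lemma~\ref{notP}; you encode it as a nondiagonal $\Gamma$-automorphism of $\Sigma$ and conclude via Lemma~\ref{nontrivial}(a) together with the symmetry of stability. Both routes are legitimate: the paper's is marginally more economical, since checking that the block $V(\Gamma)\times\{i\}$ is not sent to a block is immediate, whereas yours rides on the counting argument inside Lemma~\ref{nontrivial}(a). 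Your key observation---that every edge of $\Gamma$ has both ends in one component, so $\alpha_u=\alpha_v$ on every edge---is exactly the right verification and is where disconnectedness earns its keep.

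There is, however, one genuine (though easily repaired) gap: your verification of the biconditional in Definition~\ref{def:sigma-auto} tacitly requires $E(\Gamma)\neq\emptyset$. What you establish is ``for every edge $\{u,v\}$ of $\Gamma$ and all $x,y\in V(\Sigma)$: $\{x,y\}\in E(\Sigma)$ iff $\{x^{\alpha_u},y^{\alpha_v}\}\in E(\Sigma)$,'' and this implies the required ``$\{x,y\}\in E(\Sigma)$ iff $\{x^{\alpha_u},y^{\alpha_v}\}\in E(\Sigma)$ for all edges $\{u,v\}$'' only when at least one edge of $\Gamma$ exists. If the disconnected graph $\Gamma$ has no edges, the right-hand side of the defining biconditional is vacuously true for every pair $x,y$, so \emph{no} tuple is a $\Gamma$-automorphism of $\Sigma$ unless all pairs of vertices of $\Sigma$ are adjacent, and Lemma~\ref{nontrivial}(a) cannot be invoked. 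The lemma as stated does cover this case, so you would need to dispose of it separately (for instance, $E(\Gamma)=\emptyset$ gives $E(\Gamma\times\Sigma)=\emptyset$, whence $\Aut(\Gamma\times\Sigma)$ is the full symmetric group on $|V(\Gamma)|\cdot|V(\Sigma)|\geqslant 4$ points and is strictly larger than $\Aut(\Gamma)\times\Aut(\Sigma)$). The paper's construction handles this degenerate case without any extra work.
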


\begin{proof}
Without loss of generality we may assume that $\Gamma$ is disconnected and $\Aut(\Sigma)\neq1$. Take a connected component $\Gamma_1$ of $\Gamma$ and an element $\alpha \ne 1$ of $\Aut(\Sigma)$. Let $\sigma$ be the permutation of $V(\Gamma\times\Sigma)$ which fixes each vertex in $V(\Gamma\times\Sigma)\setminus V(\Gamma_1\times\Sigma)$ and permutes the vertices in $V(\Gamma_1\times\Sigma)$ in the following way:
\[
(u,i)^\sigma=(u,i^\alpha) \quad \text{for each $(u,i)\in V(\Gamma_1\times\Sigma)$}.
\]
It is straightforward to verify that $\sigma\in\Aut(\Gamma\times \Sigma)\setminus P(\Gamma,\Sigma)$. So $(\Gamma,\Sigma)$ is unstable by Lemma \ref{notP}.
\end{proof}

In regard to Lemma \ref{disconnected}, there exist both stable pairs $(\Gamma,\Sigma)$ and unstable pairs $(\Gamma,\Sigma)$ with $\Gamma$ disconnected and $\Aut(\Sigma)=1$, as illustrated by the following example.

\begin{example} 
Let $\Gamma_1$ be the graph with $V(\Gamma_1)=\{1,2,3,4,5\}$ and $$E(\Gamma_1)=\left\{\{1,2\},\{3,4\},\{4,5\},\{3,5\}\right\}.$$ Let $\Gamma_2$ be the graph with $V(\Gamma_2)=\{1,2,3,4,5,6,7,8\}$ and $$E(\Gamma_2)=\{\{1,2\},\{2,3\},\{3,4\},\{4,5\},\{3,6\},\{4,6\},\{7,8\}\}.$$ Let $\Sigma$ be the graph with $V(\Sigma)=\{1,2,3,4,5,6\}$ and $$E(\Sigma)=\{\{1,2\},\{2,3\},\{3,4\},\{4,5\},\{3,6\},\{4,6\}\}.$$ Then both $\Gamma_1$ and $\Gamma_2$ are disconnected and $\Aut(\Sigma)=1$. Computation in \textsc{Magma}~\cite{BCP1997} shows that $|\Aut(\Gamma_1 \times \Sigma)|=|\Aut(\Gamma_1)|=12$, $|\Aut(\Gamma_2)|=2$, and $|\Aut(\Gamma_2 \times \Sigma)|=4$. Hence $(\Gamma_1,\Sigma)$ is stable but $(\Gamma_2,\Sigma)$ is unstable.
\end{example}
 
As usual, for a graph $\Gamma$ and a subset $U \subseteq V(\Gamma)$, we use $\langle U \rangle$ to denote the subgraph of $\Gamma$ induced by $U$, namely the graph with vertex set $U$ in which two vertices $u, v \in U$ are adjacent if and only if they are adjacent in $\Gamma$.
 
\begin{lemma}
\label{bipartite}
Let $\Gamma$ and $\Sigma$ be connected bipartite graphs. If both $\Aut(\Gamma)$ and $\Aut(\Sigma)$ are nontrivial groups, then $(\Gamma,\Sigma)$ is unstable.
\end{lemma}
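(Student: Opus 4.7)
The plan is to exploit that both bipartitions are unique (as $\Gamma,\Sigma$ are connected bipartite) and that, by Lemma \ref{cvn}(a), $\Gamma\times\Sigma$ has exactly two connected components. Writing $V(\Gamma)=A\cup B$ and $V(\Sigma)=X\cup Y$ for the bipartitions, the components are $C_1=(A\times X)\cup(B\times Y)$ and $C_2=(A\times Y)\cup(B\times X)$. I would split the argument into two cases according to whether at least one of $\Gamma,\Sigma$ has a nontrivial automorphism that preserves its bipartition.

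\textbf{Case 1.} Assume without loss of generality that $\Gamma$ has a nontrivial $\alpha\in\Aut(\Gamma)$ preserving the parts $A$ and $B$. For each $i\in V(\Sigma)$ define $\alpha_i$ to act as $\alpha$ on $A$ and as the identity on $B$ when $i\in X$, and as the identity on $A$ and as $\alpha$ on $B$ when $i\in Y$; the part-preserving property of $\alpha$ makes each $\alpha_i$ a well-defined permutation of $V(\Gamma)$. A short case analysis on the parts containing $u,v$ (and using bipartiteness of $\Sigma$ to put any edge of $\Sigma$ in the form $\{i,j\}$ with $i\in X$, $j\in Y$) verifies that $(\alpha_1,\dots,\alpha_n)$ is a $\Sigma$-automorphism: the key subcase, $u\in A, v\in B$, gives $\{u^{\alpha_i}, v^{\alpha_j}\}=\{u^\alpha, v^\alpha\}$, which lies in $E(\Gamma)$ exactly when $\{u,v\}$ does because $\alpha\in\Aut(\Gamma)$, while the remaining placements of $u,v$ yield either non-edges on both sides or the literal identity $\{u^{\alpha_i}, v^{\alpha_j}\}=\{u,v\}$. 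Nondiagonality holds because the nontriviality of $\alpha$ forces $\alpha|_A\neq\mathrm{id}$ or $\alpha|_B\neq\mathrm{id}$, so $\alpha_i\neq\alpha_j$ for any $i\in X$, $j\in Y$. Instability then follows from Lemma \ref{nontrivial}(a).

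\textbf{Case 2.} Assume neither $\Gamma$ nor $\Sigma$ has a nontrivial part-preserving automorphism. Since both $\Aut(\Gamma)$ and $\Aut(\Sigma)$ are nontrivial, I pick nontrivial $\alpha\in\Aut(\Gamma)$ and $\beta\in\Aut(\Sigma)$, both of which must swap the parts of their respective graphs. Note that $(\alpha,1)\in\Aut(\Gamma\times\Sigma)$ interchanges $C_1$ and $C_2$ (because $\alpha$ swaps $A$ and $B$), as does $(1,\beta)$. Define $\sigma\colon V(\Gamma\times\Sigma)\to V(\Gamma\times\Sigma)$ by
\[
(u,i)^\sigma = \begin{cases} (u^\alpha,i), & (u,i)\in C_1,\\ (u,i^\beta), & (u,i)\in C_2.\end{cases}
\]
Then $\sigma|_{C_1}=(\alpha,1)|_{C_1}$ and $\sigma|_{C_2}=(1,\beta)|_{C_2}$ are graph isomorphisms $C_1\to C_2$ and $C_2\to C_1$ respectively, and because all edges of $\Gamma\times\Sigma$ lie within a single component, $\sigma\in\Aut(\Gamma\times\Sigma)$. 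However, for any $i\in X$ the column $V(\Gamma)\times\{i\}$ contains vertices $(u,i)$ with $u\in A$ (in $C_1$, sent by $\sigma$ into column $i$) together with vertices $(u,i)$ with $u\in B$ (in $C_2$, sent into column $i^\beta\neq i$), so $\sigma$ does not preserve the column partition. Hence $\sigma\in\Aut(\Gamma\times\Sigma)\setminus P(\Gamma,\Sigma)$, and Lemma \ref{notP} yields instability of $(\Gamma,\Sigma)$.

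The main obstacle is Case 2: the hybrid construction of Case 1 collapses as soon as $\alpha$ swaps the parts (the would-be $\alpha_i$ cease to be bijections), forcing one to abandon the $\Sigma$-automorphism route and instead glue isomorphisms on the two components of $\Gamma\times\Sigma$ in a way that shuffles the column partition, so as to violate the conclusion of Lemma \ref{notP}.
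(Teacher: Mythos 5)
Your proof is correct, but it takes a genuinely different route from the paper's. The paper does not split into cases: it notes that $|\Aut(\Gamma\times\Sigma)|\geqslant|\Aut(\Gamma)|\cdot|\Aut(\Sigma)|\geqslant4$, deduces that at least one of the two components of $\Gamma\times\Sigma$ has a nontrivial automorphism $\alpha$, extends $\alpha$ by the identity on the other component to get $\sigma\in\Aut(\Gamma\times\Sigma)$, and then argues by contradiction: if $(\Gamma,\Sigma)$ were stable, Lemma~\ref{notP} would put $\sigma$ in $P(\Gamma,\Sigma)$, the induced $\Sigma$-automorphism would be diagonal by Lemma~\ref{nontrivial}(a), and since $\sigma$ fixes one component pointwise this forces $\sigma=1$, i.e.\ $\alpha=1$. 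Your argument instead produces explicit witnesses, split according to whether some nontrivial automorphism of $\Gamma$ or $\Sigma$ preserves its bipartition: in the part-preserving case a concrete nondiagonal $\Sigma$-automorphism of $\Gamma$ (feeding Lemma~\ref{nontrivial}(a) directly), and in the part-swapping case a concrete element of $\Aut(\Gamma\times\Sigma)\setminus P(\Gamma,\Sigma)$ obtained by gluing $(\alpha,1)$ on one component with $(1,\beta)$ on the other (feeding Lemma~\ref{notP}). Both proofs lean on the same two auxiliary lemmas, but the paper's counting argument is shorter and uniform, while yours is constructive and makes visible which of the two failure modes of stability actually occurs; your Case 2 construction also avoids any appeal to the automorphism groups of the induced components. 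The only points worth making explicit in a polished write-up are that both parts of each bipartition are nonempty (guaranteed since a connected graph with nontrivial automorphism group has at least two vertices and hence an edge), and that in Case 2 every nontrivial automorphism necessarily swaps the parts because a connected bipartite graph has a unique bipartition.
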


\begin{proof}
Let $\{B_1,B_2\}$ be the bipartition of $\Gamma$ and $\{C_1,C_2\}$ the bipartition of $\Sigma$. Set
\[
U_1 = (B_1 \times C_1) \cup (B_2 \times C_2) \text{ and }
U_2 = (B_1 \times C_2) \cup (B_2 \times C_1).
\]
Then $\langle U_1\rangle$ and $\langle U_2\rangle$ are two connected components of $\Gamma\times\Sigma=\langle U_1 \cup U_2\rangle$. Since both $\Aut(\Gamma)$ and $\Aut(\Sigma)$ are nontrivial, we have
\begin{equation}
\label{aa}
|\Aut(\Gamma\times\Sigma)|\geqslant|\Aut(\Gamma)|\cdot|\Aut(\Sigma)|\geqslant2\cdot 2=4. 
\end{equation}
If both $\Aut(\langle U_1\rangle)$ and $\Aut(\langle U_2\rangle)$ are trivial, then
$$
\Aut(\Gamma\times\Sigma)=\Aut(\langle U_1 \cup U_2\rangle)=
\begin{cases}
1, \quad&\text{if $\langle U_1\rangle\ncong\langle U_2\rangle$}\\
\ZZ_2, \quad&\text{if $\langle U_1\rangle\cong\langle U_2\rangle$},
\end{cases}
$$
which contradicts \eqref{aa}. Hence at least one of $\Aut(\langle U_1\rangle)$ and $\Aut(\langle U_2\rangle)$ is nontrivial. Without loss generality we may assume that $\Aut(\langle U_2\rangle)$ is nontrivial. Take an element $1\neq\alpha\in\Aut(\langle U_2\rangle)$. Let $\sigma$ be the permutation of $V(\Gamma\times\Sigma)$ defined by
\begin{equation}\label{sa}
(u,i)^\sigma=
\begin{cases}
(u,i), \quad & \text{if $(u,i)\in U_1$}\\
(u,i)^{\alpha}, \quad & \text{if $(u,i)\in U_2$}
\end{cases}
\end{equation}
for $(u,i)\in V(\Gamma\times\Sigma)$. Then $\sigma\in\Aut(\Gamma\times\Sigma)$.

Suppose to the contrary that $(\Gamma,\Sigma)$ is stable. Then by Lemma \ref{notP}, $\Aut(\Gamma\times\Sigma)=P(\Gamma,\Sigma)$, and so $\sigma\in P(\Gamma,\Sigma)$.
Set $V(\Sigma)=\{1,\dots,n\}$ and let $\alpha_1,\dots,\alpha_n$ be permutations of $V(\Gamma)$ such that $u^{\alpha_i}=(u,i)^{\sigma\pi_\Gamma}$
for $u\in V(\Gamma)$ and $i\in V(\Sigma)$. Since $\sigma\in P(\Gamma,\Sigma)$, by \eqref{sa} we have for $(u,i)\in V(\Gamma\times\Sigma)$,
\begin{equation}\label{u1u2}
(u,i)^\sigma=
\begin{cases}
(u,i), \quad&\text{if $(u,i)\in U_1$}\\
(u^{\alpha_i},i), \quad&\text{if $(u,i)\in U_2$.}
\end{cases}
\end{equation}
Hence
\begin{align*}
\{u, v\} \in E(\Gamma) \Leftrightarrow &\, \{(u,i)^\sigma, (v,j)^\sigma\} \in E(\Gamma\times\Sigma) \text{ for any $i,j\in V(\Sigma)$ with $\{i, j\} \in E(\Sigma)$}\\
\Leftrightarrow &\, \{(u^{\alpha_i},i), (v^{\alpha_j},j)\} \in E(\Gamma\times\Sigma) \text{ for any $i,j\in V(\Sigma)$ with $\{i, j\} \in E(\Sigma)$}\\
\Leftrightarrow &\, \{u^{\alpha_i}, v^{\alpha_j}\} \in E(\Gamma) \text{ for any $i,j\in V(\Sigma)$ with $\{i, j\} \in E(\Sigma)$.}
\end{align*}
In other words, $(\alpha_1,\dots,\alpha_n)$ is a $\Sigma$-automorphism of $\Gamma$. Since $(\Gamma,\Sigma)$ is stable, by Lemma \ref{nontrivial}(a) we deduce that $\alpha_1=\cdots=\alpha_n$.
This together with~\eqref{u1u2} implies that $\sigma$ fixes each vertex of $\Gamma\times\Sigma$, whence $\sigma=1$. It then follows from~\eqref{sa} that $\alpha=1$. This contradiction shows that $(\Gamma,\Sigma)$ is unstable and the proof is complete.
\end{proof}

The following example shows that the condition that both $\Aut(\Gamma)$ and $\Aut(\Sigma)$ are nontrivial cannot be removed from Lemma \ref{bipartite} for otherwise the result may not be true.

\begin{example}\label{bipartite-example}
Let $\Gamma$ be the graph with $V(\Gamma)=\{1,2,3,4,5,6,7\}$ and $E(\Gamma)=\{\{1,2\},\{2,3\},\{3,4\},\{4,5\},\{2,6\},\{3,7\},\{6,7\}\}$. Let $\Sigma\cong K_2$ with $V(\Sigma)=\{a,b\}$. Then both $\Gamma$ and $\Sigma$ are bipartite and $\Aut(\Gamma)=1$. Let $\sigma$ be the permutation of $V(\Gamma\times\Sigma)$ which interchanges $(i,a)$ and $(i,b)$ for each $i\in V(\Gamma)$. Then $\Aut(\Gamma\times\Sigma)=\langle\sigma\rangle\cong\ZZ_2$. Hence $\Aut(\Gamma\times\Sigma)\cong\Aut(\Gamma)\times\Aut(\Sigma)$ and $(\Gamma,\Sigma)$ is stable.
\end{example}

Let $\Sigma$ and $\Gamma$ be graphs each with a nontrivial automorphism group. If one of them is $R$-thick or disconnected, then by Lemma \ref{VD} or \ref{disconnected} we know that $(\Gamma,\Sigma)$ is unstable. Part (b) of the following lemma determines the stability of $(\Gamma,\Sigma)$ when both $\Sigma$ and $\Gamma$ are connected, $R$-thin and non-bipartite.
 
\begin{lemma}
\label{coprime}
Let $\Gamma$ and $\Sigma$ be graphs. 
\begin{enumerate}[\rm (a)]
\item If $(\Gamma,\Sigma)$ is stable, then $\Gamma$ and $\Sigma$ are coprime.
\item If both $\Gamma$ and $\Sigma$ are connected, $R$-thin and non-bipartite, then $(\Gamma,\Sigma)$ is stable if and only if $\Gamma$ and $\Sigma$ are coprime.  
\end{enumerate}
\end{lemma}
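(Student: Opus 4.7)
My plan for part (a) is to prove the contrapositive by exhibiting an automorphism of $\Gamma\times\Sigma$ that does not preserve the partition $\{V(\Gamma)\times\{i\}\mid i\in V(\Sigma)\}$, and then to invoke Lemma~\ref{notP}. Assume $\Gamma$ and $\Sigma$ share a common factor $\Delta$ with $|V(\Delta)|\geq 2$, so we may write $\Gamma\cong\Delta\times\Gamma'$ and $\Sigma\cong\Delta\times\Sigma'$ for some graphs $\Gamma'$ and $\Sigma'$. Identifying $V(\Gamma\times\Sigma)$ with $V(\Delta)\times V(\Gamma')\times V(\Delta)\times V(\Sigma')$, let $\sigma$ be the permutation swapping the two $V(\Delta)$-coordinates:
\[
\sigma\colon\bigl((d_1,g),(d_2,s)\bigr)\mapsto \bigl((d_2,g),(d_1,s)\bigr).
\]
Adjacency in $\Gamma\times\Sigma$ reads $d_1\sim d_1'$, $d_2\sim d_2'$ in $\Delta$, $g\sim g'$ in $\Gamma'$ and $s\sim s'$ in $\Sigma'$, which is symmetric in the two $\Delta$-coordinates, so $\sigma\in\Aut(\Gamma\times\Sigma)$. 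However, $\sigma$ sends the block $V(\Gamma)\times\{(e_0,s_0)\}$ to a set whose $\Sigma$-coordinates are $\{(d_1,s_0):d_1\in V(\Delta)\}$, which is not a single block once $|V(\Delta)|\geq 2$. Hence $\sigma\notin P(\Gamma,\Sigma)$, so $\Aut(\Gamma\times\Sigma)\neq P(\Gamma,\Sigma)$, and by the contrapositive of Lemma~\ref{notP} the pair $(\Gamma,\Sigma)$ is unstable.

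For part (b), one direction is exactly (a). For the converse, assume $\Gamma$ and $\Sigma$ are coprime. By Lemma~\ref{cvn}, the product $\Gamma\times\Sigma$ is connected, non-bipartite, and $R$-thin. Writing the prime factorizations $\Gamma=\Gamma_1\times\cdots\times\Gamma_m$ and $\Sigma=\Sigma_1\times\cdots\times\Sigma_n$, each prime factor is again connected, non-bipartite, and $R$-thin (these properties pass to prime factors of a connected non-bipartite $R$-thin graph by Lemma~\ref{cvn}), and coprimality guarantees that no $\Gamma_i$ is isomorphic to any $\Sigma_j$. I would then invoke the classical structure theorem (see \cite[Chapter~8]{HIK2011}) describing $\Aut$ of a direct product of connected non-bipartite $R$-thin graphs in terms of its prime factorization: it is generated by the automorphism groups of the prime factors together with permutations of isomorphic prime factors. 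In our setting the latter permutations only interchange isomorphic $\Gamma_i$'s among themselves or isomorphic $\Sigma_j$'s among themselves, so they already lie in $\Aut(\Gamma)\times\Aut(\Sigma)$. Consequently $\Aut(\Gamma\times\Sigma)=\Aut(\Gamma)\times\Aut(\Sigma)$ and $(\Gamma,\Sigma)$ is stable.

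The main obstacle I expect is in part (b): correctly applying the structural theorem for the automorphism group of a direct product, together with verifying that the coprime hypothesis truly prevents any $\Gamma_i$ from being isomorphic to any $\Sigma_j$ in the prime factorization of $\Gamma\times\Sigma$. Part (a) is a short direct construction, once one unravels what the partition $\{V(\Gamma)\times\{i\}\mid i\in V(\Sigma)\}$ becomes under the identification $V(\Gamma\times\Sigma)\cong V(\Delta)\times V(\Gamma')\times V(\Delta)\times V(\Sigma')$.
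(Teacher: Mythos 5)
Your proposal is correct and follows essentially the same route as the paper: part (a) is the identical coordinate-swap automorphism $(x,u,y,v)\mapsto(x,v,y,u)$ combined with Lemma~\ref{notP}, and part (b) rests on the same structure theory from \cite[Chapter~8]{HIK2011}. The only cosmetic difference is that the paper cites \cite[Theorem~8.18]{HIK2011} directly for the conclusion $\Aut(\Gamma\times\Sigma)=\Aut(\Gamma)\times\Aut(\Sigma)$, whereas you unpack it via prime factorizations; both are fine.
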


\begin{proof}
(a) We prove the contrapositive of this statement. Suppose that $\Gamma$ and $\Sigma$ are not coprime. Then there exist graphs $\Gamma_1$, $\Sigma_1$ and $\Delta$ such that $\Gamma=\Gamma_1\times\Delta$, $\Sigma=\Sigma_1\times\Delta$ and $|V(\Delta)|>1$. So $\Gamma\times\Sigma=\Gamma_1\times\Delta\times\Sigma_1\times\Delta$. Let $\sigma$ be the permutation of $V(\Gamma\times\Sigma)=V(\Gamma_1\times\Delta\times\Sigma_1\times\Delta)$ defined by $(x,u,y,v)^\sigma=(x,v,y,u)$ for $(x,u,y,v)\in V(\Gamma_1\times\Delta\times\Sigma_1\times\Delta)$. Since $|V(\Delta)|>1$, it is straightforward to verify that $\sigma\in\Aut(\Gamma\times \Sigma)\setminus P(\Gamma,\Sigma)$. This together with Lemma \ref{notP} implies that $(\Gamma,\Sigma)$ is unstable. 

(b) The ``only if" part follows from (a), so it remains to prove the ``if" part. Since both $\Gamma$ and $\Sigma$ are connected, $R$-thin and non-bipartite, by Lemma~\ref{cvn}, so is $\Gamma\times\Sigma$. If $\Gamma$ and $\Sigma$ are coprime, then by~\cite[Theorem~8.18]{HIK2011}, we have $\Aut(\Gamma\times\Sigma)=\Aut(\Gamma)\times\Aut(\Sigma)$ and hence $(\Gamma,\Sigma)$ is stable.
\end{proof}

\subsection{Proof of Theorem~\ref{thm1}}

We are now ready to prove Theorem~\ref{thm1}.

\begin{proof}
Suppose that $(\Gamma,\Sigma)$ is stable. Then by Lemmas~\ref{VD} and~\ref{coprime}(a) we know that $\Gamma$ and $\Sigma$ are coprime $R$-thin graphs. Moreover, if both $\Aut(\Gamma)$ and $\Aut(\Sigma)$ are nontrivial, then by Lemmas~\ref{disconnected} and~\ref{bipartite} both $\Gamma$ and $\Sigma$ are connected and at least one of them is non-bipartite.
\end{proof}

\section{Pairs of regular graphs with coprime valencies}
\label{GraphsCoprime}

As a preparation for our proof of Theorem~\ref{main-theorem2}, we study pairs of regular graphs with coprime valencies in this section. As before, throughout this section $\Gamma$ and $\Sigma$ are graphs with $V(\Sigma) = \{1, \ldots, n\}$ and $n > 1$. As usual, the set of positive integers is denoted by $\Nor$. A key concept used in this section is the \emph{Boolean square} \cite{HIK2011} of a graph $\Delta$, denoted by $B(\Delta)$, which is the graph with vertex set $V(\Delta)$ and edge set $\{\{u,v\}\mid u, v \in V(\Delta), u \ne v, N_{\Delta}(u)\cap N_{\Delta}(v)\neq\emptyset\}$. 

\subsection{Notation}

For $(u,i), (v,j) \in V(\Gamma\times\Sigma)$, define
\[
f_{\Gamma,\Sigma}((u,i),(v,j)) = \frac{|N_{\Gamma\times\Sigma}((u,i))\cap N_{\Gamma\times\Sigma}((v,j))|}{|N_{\Gamma\times\Sigma}((u,i))|}.
\]
Define
$$
X_{\Gamma,\Sigma}(u,i) = \{(v,j)\in V(\Gamma\times\Sigma) \setminus \{(u,i)\} \mid \val(\Sigma)\cdot f_{\Gamma,\Sigma}((u,i),(v,j))\in\Nor\}
$$
and
$$
Y_{\Gamma,\Sigma}(u,i) = \{(v, j) \in X_{\Gamma,\Sigma}(u,i) \mid f_{\Gamma,\Sigma}((u,i),(v,j)) \geqslant f_{\Gamma,\Sigma}((u,i),(w,j))\ \forall w\in V(\Gamma)\}.
$$
We will abbreviate $f_{\Gamma,\Sigma}((u,i),(v,j))$, $X_{\Gamma,\Sigma}(u,i)$ and $Y_{\Gamma,\Sigma}(u,i)$ to $f((u,i),(v,j))$, $X(u,i)$ and $Y(u,i)$, respectively, when there is no danger of confusion.

The following lemma follows immediately from the definitions of $\Gamma \times \Sigma$ and $f((u,i),(v,j))$. 

\begin{lemma}\label{vs}
Let $\Gamma$ and $\Sigma$ be regular graphs, and let $(u,i),(v,j)\in V(\Gamma\times\Sigma)$. Then
\begin{equation}
\label{fs}
f((u,i),(v,j))=\frac{|N_\Gamma(u)\cap N_\Gamma(v)|}{\val(\Gamma)}\cdot\frac{|N_\Sigma(i)\cap N_\Sigma(j)|}{\val(\Sigma)}.
\end{equation}
\end{lemma}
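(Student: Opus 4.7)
The plan is to unwind the definition of the direct product to identify the neighborhood structure in $\Gamma \times \Sigma$, and then take the quotient.

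First I would observe that, by the definition of the direct product, a vertex $(w,k)$ is adjacent to $(u,i)$ in $\Gamma \times \Sigma$ if and only if $w \in N_\Gamma(u)$ and $k \in N_\Sigma(i)$. Hence
\[
N_{\Gamma\times\Sigma}((u,i)) = N_\Gamma(u) \times N_\Sigma(i),
\]
and similarly for $(v,j)$. Intersecting these two Cartesian products gives
\[
N_{\Gamma\times\Sigma}((u,i)) \cap N_{\Gamma\times\Sigma}((v,j)) = \bigl(N_\Gamma(u) \cap N_\Gamma(v)\bigr) \times \bigl(N_\Sigma(i) \cap N_\Sigma(j)\bigr),
\]
so the cardinality of the intersection is the product of $|N_\Gamma(u) \cap N_\Gamma(v)|$ and $|N_\Sigma(i) \cap N_\Sigma(j)|$.

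Next, I would use the regularity hypothesis: since $|N_{\Gamma\times\Sigma}((u,i))| = |N_\Gamma(u)| \cdot |N_\Sigma(i)| = \val(\Gamma) \cdot \val(\Sigma)$, substituting these two expressions into the definition of $f((u,i),(v,j))$ gives the desired formula after splitting the fraction as a product. No obstacle is expected here: everything is immediate from the definitions of the direct product, of the function $f$, and of regularity — indeed, the authors themselves introduce the lemma by saying it "follows immediately" from these definitions, so the proof should be only a couple of lines.
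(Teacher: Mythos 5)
Your proof is correct and matches the paper's intent exactly: the paper gives no written proof, stating only that the lemma follows immediately from the definitions of $\Gamma\times\Sigma$ and $f$, and your argument via $N_{\Gamma\times\Sigma}((u,i)) = N_\Gamma(u)\times N_\Sigma(i)$ and the product of intersections is precisely that immediate verification.
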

 
Now assume that $\Sigma$ is vertex-transitive. Then the vertex-transitivity of $\Sigma$ implies that there exist integers $n_1 > n_2 > \cdots > n_t \ge 1$ such that for any $i \in V(\Sigma)$ and $x\in N_{B(\Sigma)}(i)$ we have 
$$
|N_\Sigma(x)\cap N_\Sigma(i)| \in \{n_1, n_2, \dots, n_t\} \text{ and } |N_{B(\Sigma)}(i)|>n_1.
$$ 
Set 
$$
D_0(i)=\{i\},
$$
$$
D_k(i)=\{x\in N_{B(\Sigma)}(i)\mid |N_\Sigma(x)\cap N_\Sigma(i)|=n_k, 1\leqslant n_k<|N_{B(\Sigma)}(i)|\}\ \text{ for } 1 \le k \le t,
$$
$$
D_{t+1}(i)=\{x\in V(\Sigma) \mid N_\Sigma(i)\cap N_\Sigma(x)=\emptyset\},
$$ 
and 
$$
D_s(i)=\emptyset\ \text{ for } s \geqslant t+2.
$$
Then
$$
V(\Sigma)=\cup_{k=1}^{t+1}D_k(i),\;
\val(\Sigma)=\sum_{k=1}^t|D_k(i)| \ \text{ and }\
N_{B(\Sigma)}(i)=\cup_{k=1}^tD_k(i).
$$
For each $(u,i)\in V(\Gamma\times\Sigma)$, define 
$$
X_0(u,i)=Y_0(u,i)=\{(u,i)\}
$$
and
$$
X_k(u,i)=X(u,i)\cap(V(\Gamma)\times D_k(i))\ \text{ for } k \ge 1.
$$
Define $Y_k(u,i)$ to be the set of elements $(v,j)$ of $X(u,i) \setminus (\cup_{m=0}^{k-1}X_m(u,i))$ such that  
$$
f((u,i),(v,j)) \geqslant f((u,i),(w,y))\ \text{ for all }\ (w,y) \in X(u,i) \setminus (\cup_{m=0}^{k-1}X_m(u,i)).
$$

\subsection{$X(u,i)$ and $X_k(u,i)$}

\begin{lemma}\label{X}
Let $\Gamma$ and $\Sigma$ be graphs. Then for any $(u, i) \in V(\Gamma \times \Sigma)$ and $\sigma \in \Aut(\Gamma\times\Sigma)$, we have 
$$
(X(u,i))^\sigma=X((u,i)^\sigma).
$$
\end{lemma}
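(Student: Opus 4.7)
The plan is to show that the quantity $f((u,i),(v,j))$ is invariant under $\Aut(\Gamma\times\Sigma)$, from which the lemma follows immediately because the membership condition defining $X(u,i)$ depends only on the $f$-values and on the bookkeeping condition $(v,j)\ne(u,i)$, both of which are preserved by any permutation in $\Aut(\Gamma\times\Sigma)$.

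\medskip

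First I would record the standard fact that, for any $\sigma\in\Aut(\Gamma\times\Sigma)$ and any $(u,i),(v,j)\in V(\Gamma\times\Sigma)$,
\[
N_{\Gamma\times\Sigma}((u,i)^\sigma) = \bigl(N_{\Gamma\times\Sigma}((u,i))\bigr)^\sigma,
\]
so in particular
\[
|N_{\Gamma\times\Sigma}((u,i)^\sigma)| = |N_{\Gamma\times\Sigma}((u,i))|,
\]
and since $\sigma$ restricted to $N_{\Gamma\times\Sigma}((u,i))$ is a bijection onto $N_{\Gamma\times\Sigma}((u,i)^\sigma)$, also
\[
|N_{\Gamma\times\Sigma}((u,i)^\sigma)\cap N_{\Gamma\times\Sigma}((v,j)^\sigma)| = |N_{\Gamma\times\Sigma}((u,i))\cap N_{\Gamma\times\Sigma}((v,j))|.
\]
Dividing the second equality by the first gives
\[
f\bigl((u,i)^\sigma,(v,j)^\sigma\bigr) = f\bigl((u,i),(v,j)\bigr).
\]

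\medskip

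Once this invariance is established, the rest is a one-line verification. For any $(v,j)\in V(\Gamma\times\Sigma)$, the conditions $(v,j)\ne(u,i)$ and $\val(\Sigma)\cdot f((u,i),(v,j))\in\Nor$ are equivalent, respectively, to $(v,j)^\sigma\ne(u,i)^\sigma$ and $\val(\Sigma)\cdot f((u,i)^\sigma,(v,j)^\sigma)\in\Nor$. Hence $(v,j)\in X(u,i)$ if and only if $(v,j)^\sigma\in X((u,i)^\sigma)$, which, since $\sigma$ is a bijection on $V(\Gamma\times\Sigma)$, is exactly the claim $(X(u,i))^\sigma=X((u,i)^\sigma)$.

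\medskip

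There is no real obstacle here; the lemma is essentially the statement that $X(u,i)$ is a graph-theoretic invariant of the pointed graph $(\Gamma\times\Sigma,(u,i))$. The only point that requires attention is to make sure the formula defining $f$ is expressed purely in terms of neighborhoods in $\Gamma\times\Sigma$ (not in terms of the factors $\Gamma$ and $\Sigma$ separately), so that the automorphism-invariance follows from general principles rather than needing the product structure. The definition as stated is already of this form, so the proof is immediate.
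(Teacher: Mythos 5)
Your proof is correct and follows essentially the same route as the paper's: both arguments rest on the observation that $f$ is an invariant of $\Aut(\Gamma\times\Sigma)$ because it is defined purely via neighbourhoods in the product graph, so the two defining conditions for membership in $X(u,i)$ are preserved by $\sigma$. The only difference is that you spell out the invariance of $f$ explicitly, whereas the paper states it without elaboration.
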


\begin{proof}
For any $(u,i),(v,j)\in V(\Gamma\times\Sigma)$ and $\sigma\in\Aut(\Gamma\times\Sigma)$, we have
\[
(u,i)\neq(v,j)\Leftrightarrow(u,i)^\sigma\neq(v,j)^\sigma
\]
and
\[
\val(\Sigma)\cdot f((u,i),(v,j))\in\Nor\Leftrightarrow\val(\Sigma)\cdot f((u,i)^\sigma,(v,j)^\sigma)\in\Nor.
\]
It follows that
\[
(v,j)^\sigma\in (X(u,i))^\sigma\Leftrightarrow(v,j)\in X(u,i)\Leftrightarrow(v,j)^\sigma\in X((u,i)^\sigma).
\]
Thus $(X(u,i))^\sigma=X\left((u,i)^\sigma\right)$ as required.
\end{proof}

\begin{lemma}\label{ijneq}
Let $\Gamma$ and $\Sigma$ be regular graphs with coprime valencies. Suppose that $\Gamma$ is $R$-thin. Then for any $(u, i) \in V(\Gamma \times \Sigma)$ and $(v,j)\in X(u,i)$, we have $i\neq j$.
\end{lemma}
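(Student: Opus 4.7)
The plan is to argue by contradiction: assume $i = j$ and derive a contradiction using the coprimality of valencies together with the $R$-thin hypothesis on $\Gamma$.

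First I would apply Lemma~\ref{vs} to compute
\[
\val(\Sigma)\cdot f((u,i),(v,j)) = \frac{|N_\Gamma(u)\cap N_\Gamma(v)|\cdot|N_\Sigma(i)\cap N_\Sigma(j)|}{\val(\Gamma)}.
\]
Setting $j=i$, the numerator becomes $|N_\Gamma(u)\cap N_\Gamma(v)|\cdot\val(\Sigma)$, so the membership $(v,i)\in X(u,i)$ requires
\[
\val(\Gamma)\,\bigm|\, |N_\Gamma(u)\cap N_\Gamma(v)|\cdot\val(\Sigma),
\]
and moreover that this quotient is a \emph{positive} integer (recall $\Nor$ denotes the positive integers).

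Next, since $\gcd(\val(\Gamma),\val(\Sigma))=1$, divisibility forces $\val(\Gamma)\mid |N_\Gamma(u)\cap N_\Gamma(v)|$. Combined with the trivial bound $|N_\Gamma(u)\cap N_\Gamma(v)|\leqslant\val(\Gamma)$ and the positivity requirement, we must have $|N_\Gamma(u)\cap N_\Gamma(v)|=\val(\Gamma)$. Since $\Gamma$ is regular of valency $\val(\Gamma)$, this equality together with $N_\Gamma(u)\cap N_\Gamma(v)\subseteq N_\Gamma(u)$ and $N_\Gamma(u)\cap N_\Gamma(v)\subseteq N_\Gamma(v)$ (both of size $\val(\Gamma)$) forces $N_\Gamma(u)=N_\Gamma(v)$.

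Finally, the $R$-thin hypothesis on $\Gamma$ yields $u=v$, whence $(v,j)=(u,i)$, contradicting $(v,j)\in X(u,i)\subseteq V(\Gamma\times\Sigma)\setminus\{(u,i)\}$. There is no real obstacle here; the only subtlety worth flagging is the positivity of the quotient, which is where the convention $\Nor=\mathbb{Z}_{>0}$ is used to rule out the degenerate possibility $|N_\Gamma(u)\cap N_\Gamma(v)|=0$.
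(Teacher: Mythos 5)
Your proof is correct and follows essentially the same route as the paper's: apply Lemma~\ref{vs} with $i=j$, use $\gcd(\val(\Gamma),\val(\Sigma))=1$ to force $|N_\Gamma(u)\cap N_\Gamma(v)|=\val(\Gamma)$, and conclude $u=v$ from $R$-thinness, contradicting $(v,j)\neq(u,i)$. The only difference is that you make explicit the role of positivity of the quotient (ruling out $|N_\Gamma(u)\cap N_\Gamma(v)|=0$), which the paper leaves implicit in the definition of $X(u,i)$.
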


\begin{proof}
Suppose that $i=j$ for some $(u, i) \in V(\Gamma \times \Sigma)$ and $(v,j)\in X(u,i)$. Then $|N_\Sigma(i)\cap N_\Sigma(j)|=\val(\Sigma)$. Since $(v,j)\in X(u,i)$, it follows from Lemma~\ref{vs} that
$$
\frac{|N_\Gamma(u)\cap N_\Gamma(v)|}{\val(\Gamma)}\cdot\val(\Sigma) = \val(\Sigma)\cdot f((u,i),(v,j)) \in\Nor.
$$
This together with $\gcd(\val(\Gamma),\val(\Sigma))=1$ implies that $|N_\Gamma(u)\cap N_\Gamma(v)|=\val(\Gamma)$.
Since $\Gamma$ is $R$-thin, we deduce that $u=v$, and so $(u,i)=(v,j)$, a contradiction.
\end{proof}

\begin{lemma}\label{different}
Let $\Gamma$ and $\Sigma$ be regular graphs with coprime valencies. Suppose that $\Sigma$ is $R$-thin. Then for any $u \in V(\Gamma)$, any $\sigma\in\Aut(\Gamma\times\Sigma)$, and any edge $\{i,j\}$ of $B(\Sigma)$, we have 
$$
(u,i)^{\sigma\pi_\Sigma}\neq(u,j)^{\sigma\pi_\Sigma}.
$$
\end{lemma}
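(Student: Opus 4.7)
The plan is to argue by contradiction. Suppose that for some $u \in V(\Gamma)$, some $\sigma \in \Aut(\Gamma\times\Sigma)$ and some edge $\{i,j\}$ of $B(\Sigma)$, we have $(u,i)^{\sigma\pi_\Sigma} = (u,j)^{\sigma\pi_\Sigma}$. Call this common value $k$, and write $(u,i)^\sigma = (a,k)$ and $(u,j)^\sigma = (b,k)$ for some $a,b \in V(\Gamma)$. Since $i \neq j$ by the definition of $B(\Sigma)$, we have $(u,i)\neq(u,j)$, and so $a \neq b$.

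Next, I would compute the single $\sigma$-invariant quantity $|N_{\Gamma\times\Sigma}((u,i)) \cap N_{\Gamma\times\Sigma}((u,j))|$ in two ways, exploiting the simple fact $N_{\Gamma\times\Sigma}((x,y)) = N_\Gamma(x) \times N_\Sigma(y)$. The pair $(u,i),(u,j)$ shares its first coordinate, giving
$$
|N_{\Gamma\times\Sigma}((u,i)) \cap N_{\Gamma\times\Sigma}((u,j))| = \val(\Gamma)\cdot|N_\Sigma(i)\cap N_\Sigma(j)|,
$$
whereas $(a,k),(b,k)$ shares its second coordinate, giving
$$
|N_{\Gamma\times\Sigma}((a,k)) \cap N_{\Gamma\times\Sigma}((b,k))| = |N_\Gamma(a)\cap N_\Gamma(b)|\cdot\val(\Sigma).
$$
Since an automorphism preserves the sizes of intersections of neighborhoods, these two expressions are equal.

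Finally, I would extract the contradiction from coprimality. The identity
$$
\val(\Gamma)\cdot|N_\Sigma(i)\cap N_\Sigma(j)| = |N_\Gamma(a)\cap N_\Gamma(b)|\cdot\val(\Sigma)
$$
combined with $\gcd(\val(\Gamma),\val(\Sigma)) = 1$ forces $\val(\Sigma)$ to divide $|N_\Sigma(i)\cap N_\Sigma(j)|$. But $|N_\Sigma(i)\cap N_\Sigma(j)| \leq \val(\Sigma)$, and since $\{i,j\} \in E(B(\Sigma))$ the intersection is nonempty, so in fact $|N_\Sigma(i)\cap N_\Sigma(j)| = \val(\Sigma)$, i.e., $N_\Sigma(i) = N_\Sigma(j)$ with $i \neq j$. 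This contradicts the $R$-thinness of $\Sigma$.

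The main subtlety I foresee is resisting the temptation to invoke Lemma~\ref{ijneq}, which would be the obvious tool but whose hypothesis concerns $\Gamma$ being $R$-thin, not $\Sigma$. The neighborhood-count approach above is the correct route precisely because $\val(\Gamma)$ and $\val(\Sigma)$ end up on opposite sides of the identity, so the coprimality assumption flips the divisibility onto $\val(\Sigma)$, where $R$-thinness of $\Sigma$ is available to close the argument.
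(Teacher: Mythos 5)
Your proof is correct and follows essentially the same route as the paper's: assume the projections coincide, compute $|N_{\Gamma\times\Sigma}((u,i))\cap N_{\Gamma\times\Sigma}((u,j))|$ on both sides of $\sigma$, and use coprimality of the valencies together with $R$-thinness of $\Sigma$ to derive a contradiction. The only cosmetic difference is that the paper phrases the final step as "$|N_\Sigma(i)\cap N_\Sigma(j)|$ equals $\val(\Sigma)$ or $0$, both impossible," while you rule out $0$ first via the $B(\Sigma)$-edge hypothesis; the content is identical.
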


\begin{proof}
Since $\{i,j\}$ is an edge of $B(\Sigma)$, we have $i\neq j$, and hence $(u,i)\neq(u,j)$. Since $\sigma\in\Aut(\Gamma\times\Sigma)$, we then have $(u,i)^\sigma\neq(u,j)^\sigma$. Suppose to the contrary that $(u,i)^{\sigma\pi_\Sigma}=(u,j)^{\sigma\pi_\Sigma}=z$ for some $z\in V(\Sigma)$.
Then $(u,i)^\sigma=(g,z)$ and $(u,j)^\sigma=(h,z)$ for some $g, h \in V(\Gamma)$ with $g\neq h$. Thus
\begin{align*}
|N_{\Gamma\times\Sigma}((u,i)^\sigma)\cap N_{\Gamma\times\Sigma}((u,j)^\sigma)|&=|N_{\Gamma\times\Sigma}((g,z))\cap N_{\Gamma\times\Sigma}((h,z))|\\
&=|N_\Gamma(g)\cap N_\Gamma(h)|\cdot |N_\Sigma(z)|\\
&=|N_\Gamma(g)\cap N_\Gamma(h)|\cdot\val(\Sigma).
\end{align*}
On the other hand, since $\sigma\in\Aut(\Gamma\times\Sigma)$, we have
\begin{align*}
|N_{\Gamma\times\Sigma}((u,i)^\sigma)\cap N_{\Gamma\times\Sigma}((u,j)^\sigma)|&=|N_{\Gamma\times\Sigma}((u,i))\cap N_{\Gamma\times\Sigma}((u,j))|\\
&=|N_\Gamma(u)|\cdot |N_\Sigma(i)\cap N_\Sigma(j)|\\
&=\val(\Gamma)\cdot |N_\Sigma(i)\cap N_\Sigma(j)|.
\end{align*}
Hence
\[
|N_\Gamma(g)\cap N_\Gamma(h)|\cdot\val(\Sigma)=\val(\Gamma)\cdot |N_\Sigma(i)\cap N_\Sigma(j)|.
\]
Since $\gcd(\val(\Gamma),\val(\Sigma))=1$, it follows that $\val(\Sigma)$ divides $|N_\Sigma(i)\cap N_\Sigma(j)|$. Hence $|N_\Sigma(i)\cap N_\Sigma(j)|=\val(\Sigma)$ or $|N_\Sigma(i)\cap N_\Sigma(j)|=0$. However, this is impossible as $\Sigma$ is $R$-thin and $\{i,j\}$ is an edge of $B(\Sigma)$.
This completes the proof.
\end{proof}

\begin{lemma}\label{geqn}
Let $\Gamma$ and $\Sigma$ be regular graphs with coprime valencies. Suppose that both $\Gamma$ and $\Sigma$ are $R$-thin. Then for any edge $\{u,v\}$ of $B(\Gamma)$, any edge $\{i,j\}$ of $B(\Sigma)$, and any element $\sigma\in\Aut(\Gamma\times\Sigma)$, we have
\begin{equation}
\label{eq:geqn}
|N_\Sigma((u,i)^{\sigma\pi_\Sigma})\cap N_\Sigma((v,j)^{\sigma\pi_\Sigma})|\geqslant |N_\Sigma(i)\cap N_\Sigma(j)|.
\end{equation}
\end{lemma}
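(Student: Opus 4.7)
The plan is to construct an injection from $N_\Sigma(i) \cap N_\Sigma(j)$ into $N_\Sigma(p) \cap N_\Sigma(q)$, where $p := (u,i)^{\sigma \pi_\Sigma}$ and $q := (v,j)^{\sigma \pi_\Sigma}$. First, fix any vertex $w \in N_\Gamma(u) \cap N_\Gamma(v)$; such a $w$ exists because $\{u,v\}$ is an edge of $B(\Gamma)$. For each $k \in N_\Sigma(i) \cap N_\Sigma(j)$, the vertex $(w,k)$ of $\Gamma \times \Sigma$ is adjacent to both $(u,i)$ and $(v,j)$. Since $\sigma \in \Aut(\Gamma \times \Sigma)$, the image $(w,k)^\sigma$ is a common neighbour of $(u,i)^\sigma$ and $(v,j)^\sigma$; projecting to the $\Sigma$-coordinate yields $(w,k)^{\sigma \pi_\Sigma} \in N_\Sigma(p) \cap N_\Sigma(q)$. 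Hence the map $\varphi \colon k \mapsto (w,k)^{\sigma \pi_\Sigma}$ sends $N_\Sigma(i) \cap N_\Sigma(j)$ into $N_\Sigma(p) \cap N_\Sigma(q)$.

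The key step is to show $\varphi$ is injective, which is where Lemma~\ref{different} is invoked. For any two distinct elements $k, k' \in N_\Sigma(i) \cap N_\Sigma(j)$, the vertex $i$ is a common neighbour of $k$ and $k'$ in $\Sigma$, so $\{k,k'\}$ is an edge of $B(\Sigma)$. Applying Lemma~\ref{different} to $\sigma$ at the vertex $w$ of $\Gamma$ with the edge $\{k,k'\}$ of $B(\Sigma)$ gives $(w,k)^{\sigma \pi_\Sigma} \neq (w,k')^{\sigma \pi_\Sigma}$. Hence $\varphi$ is injective, which yields $|N_\Sigma(p) \cap N_\Sigma(q)| \geq |N_\Sigma(i) \cap N_\Sigma(j)|$, as required.

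I do not foresee any significant obstacle: once one has the idea of fixing $w$ in $\Gamma$ and varying only $k$ in $\Sigma$, the argument reduces to a direct application of Lemma~\ref{different}, avoiding the delicate case analysis that would arise from trying to compare the two sides via the numerical identity $|N_\Gamma(u)\cap N_\Gamma(v)|\cdot|N_\Sigma(i)\cap N_\Sigma(j)| = |N_\Gamma((u,i)^{\sigma\pi_\Gamma})\cap N_\Gamma((v,j)^{\sigma\pi_\Gamma})|\cdot|N_\Sigma(p)\cap N_\Sigma(q)|$ alone. The coprime valency hypothesis and the $R$-thinness of the graphs play a role only indirectly, through their use in the proof of Lemma~\ref{different}.
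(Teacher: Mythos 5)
Your proof is correct and follows essentially the same route as the paper's: both fix a common neighbour $w\in N_\Gamma(u)\cap N_\Gamma(v)$, push the set $\{w\}\times(N_\Sigma(i)\cap N_\Sigma(j))$ forward by $\sigma$ into the common neighbourhood of $(u,i)^\sigma$ and $(v,j)^\sigma$, and use Lemma~\ref{different} to see that the $\Sigma$-projections of the images are pairwise distinct. Your write-up merely makes explicit the injectivity step that the paper leaves implicit in the phrase ``this together with Lemma~\ref{different} implies \eqref{eq:geqn}''.
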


\begin{proof}
Since $\{u,v\}$ is an edge of $B(\Gamma)$, we have $N_\Gamma(u)\cap N_\Gamma(v)\neq\emptyset$. Consider $w\in N_\Gamma(u)\cap N_\Gamma(v)$. We have
\[
\{w\}\times(N_\Sigma(i)\cap N_\Sigma(j))\subseteq N_{\Gamma\times\Sigma}((u,i))\cap N_{\Gamma\times\Sigma}((v,j)).
\]
Since $\sigma\in\Aut(\Gamma\times\Sigma)$, it follows that
\begin{align*}
(\{w\}\times(N_\Sigma(i)\cap N_\Sigma(j)))^\sigma&\subseteq (N_{\Gamma\times\Sigma}((u,i))\cap N_{\Gamma\times\Sigma}((v,j)))^\sigma\\
&=N_{\Gamma\times\Sigma}((u,i)^\sigma)\cap N_{\Gamma\times\Sigma}((v,j)^\sigma).
\end{align*}
This together with Lemma~\ref{different} implies \eqref{eq:geqn}.
\end{proof}

\begin{lemma}\label{Xk}
Let $\Gamma$ and $\Sigma$ be regular graphs with coprime valencies. Suppose that both $\Gamma$ and $\Sigma$ are $R$-thin. Suppose further that $\Sigma$ is vertex-transitive. Then for any $(u, i) \in V(\Gamma \times \Sigma)$, any $\sigma\in\Aut(\Gamma\times\Sigma)$, and any integer $k \ge 0$, we have 
$$
(X_k(u,i))^\sigma=X_k((u,i)^\sigma).
$$
\end{lemma}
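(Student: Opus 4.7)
The plan is to prove $(X_k(u,i))^\sigma \subseteq X_k((u,i)^\sigma)$; the reverse inclusion then follows by the same argument applied to $\sigma^{-1}$ and $(u,i)^\sigma$. The case $k=0$ is immediate from $X_0(u,i)=\{(u,i)\}$. For $k\geqslant 1$ the claim is vacuous whenever $X_k(u,i)=\emptyset$ (which covers $k\geqslant t+1$), so I assume $(v,j)\in X_k(u,i)$ and write $(u',i')=(u,i)^\sigma$, $(v',j')=(v,j)^\sigma$. The goal is to show $|N_\Sigma(i')\cap N_\Sigma(j')|=n_k$.

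The central input is the multiplicative identity
\[
|N_{\Gamma\times\Sigma}((u,i))\cap N_{\Gamma\times\Sigma}((v,j))| = |N_\Gamma(u)\cap N_\Gamma(v)|\cdot |N_\Sigma(i)\cap N_\Sigma(j)|,
\]
whose left-hand side is $\sigma$-invariant. Setting $a=|N_\Gamma(u)\cap N_\Gamma(v)|$, $b=|N_\Sigma(i)\cap N_\Sigma(j)|=n_k$, and $a',b'$ for the analogous quantities at $(u',i'),(v',j')$, we obtain $ab=a'b'$. Since $\Nor$ is the set of positive integers and $(v,j)\in X(u,i)$, all of $a,b,a',b'$ are positive. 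By Lemma~\ref{X} we have $(v',j')\in X((u',i'))$, and Lemma~\ref{ijneq} then forces $i'\neq j'$, so the vertex-transitivity of $\Sigma$ places $b'$ in $\{n_1,\dots,n_t\}$; it remains to show $b'=n_k$.

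I split on whether $u=v$. If $u\neq v$, then $\{u,v\}$ is an edge of $B(\Gamma)$, so Lemma~\ref{geqn} applied to $\sigma$ gives $b'\geqslant n_k$. Were $u'=v'$, we would have $a'=\val(\Gamma)$, while $R$-thinness of $\Gamma$ and $u\neq v$ yield $a<\val(\Gamma)$; then $an_k=\val(\Gamma)b'$ would force $b'<n_k$, a contradiction. Hence $u'\neq v'$, and Lemma~\ref{geqn} applied to $\sigma^{-1}$ delivers $n_k\geqslant b'$, whence $b'=n_k$. If instead $u=v$, then $a=\val(\Gamma)$ and $\{i,j\}$ is an edge of $B(\Sigma)$; the equation $\val(\Gamma)n_k=a'b'$ together with $a'\leqslant\val(\Gamma)$ gives $b'\geqslant n_k$. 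Assuming $u'\neq v'$ and applying Lemma~\ref{geqn} to $\sigma^{-1}$ (using the edge $\{i',j'\}$ of $B(\Sigma)$, valid since $b'>0$) yields $n_k\geqslant b'$, so $b'=n_k$ and hence $a'=\val(\Gamma)$, contradicting $R$-thinness for $u'\neq v'$. Therefore $u'=v'$ and $b'=n_k$ directly. In every case $b'=n_k$; combined with $i'\neq j'$ this places $j'$ in $D_k(i')$, giving $(v',j')\in X_k((u',i'))$.

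The main obstacle is the case $u=v$, because Lemma~\ref{geqn} cannot be invoked for the pair $(u,v)$: the graph $B(\Gamma)$ is simple and has no loops. The workaround is to use the multiplicative identity $ab=a'b'$ as a numerical substitute to deduce $b'\geqslant n_k$, and then transport the opposite inequality back through $\sigma^{-1}$ after ruling out $u'\neq v'$ via $R$-thinness of $\Gamma$.
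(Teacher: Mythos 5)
Your proof is correct, and it takes a genuinely different route from the paper's. The paper argues by induction on $k$: the lower bound $|N_\Sigma(i')\cap N_\Sigma(j')|\geqslant n_k$ comes from Lemma~\ref{geqn} just as in your argument, but the matching upper bound is obtained for $k=1$ from Lemma~\ref{ijneq} (which forces the intersection size to be at most $n_1$) and for $k\geqslant 2$ from the inductive hypothesis $(X_\ell(u,i))^\sigma=X_\ell((u,i)^\sigma)$ for $\ell<k$, which pushes $(v,j)^{\sigma\pi_\Sigma}$ out of $D_0(i'),\dots,D_{k-1}(i')$ and hence into $D_k(i')$. You instead pin down $|N_\Sigma(i')\cap N_\Sigma(j')|$ in one stroke, with no induction, via the $\sigma$-invariant identity $|N_\Gamma(u)\cap N_\Gamma(v)|\cdot|N_\Sigma(i)\cap N_\Sigma(j)|=|N_\Gamma(u')\cap N_\Gamma(v')|\cdot|N_\Sigma(i')\cap N_\Sigma(j')|$. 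Your case split on whether $u=v$ also addresses a point the paper glosses over: the paper asserts that $(v,j)\in X(u,i)$ makes $\{u,v\}$ an edge of $B(\Gamma)$, which fails when $u=v$ --- and this case genuinely occurs, e.g.\ for $(u,j)$ with $j\in N_{B(\Sigma)}(i)$. The paper's appeal to Lemma~\ref{geqn} survives only because that lemma's proof uses nothing beyond $N_\Gamma(u)\cap N_\Gamma(v)\neq\emptyset$; your counting identity handles the $u=v$ case cleanly without having to reopen Lemma~\ref{geqn}. Both arguments close with the same $\sigma^{-1}$ trick to upgrade the one-sided inclusion to an equality. What the paper's induction buys is that it never needs the exact value of $|N_\Gamma(u')\cap N_\Gamma(v')|$; what your computation buys is a shorter, non-inductive proof that is airtight on the loop issue in $B(\Gamma)$.
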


\begin{proof}
Consider an arbitrary vertex $(v,j)$ of $X_k(u,i)$. By Lemma~\ref{X}, 
\begin{equation}\label{inX}
(v,j)^\sigma\in (X_k(u,i))^\sigma\subseteq (X(u,i))^\sigma=X((u,i)^\sigma).
\end{equation}
Since $X_k(u,i)=X(u,i)\cap (V(\Gamma)\times D_k(i))$, we have $j\in D_k(i)$ and $(v,j)\in X(u,i)$, whence $\{u,v\}$ is an edge of $B(\Gamma)$ and $\{i,j\}$ is an edge of $B(\Sigma)$. It then follows from Lemma~\ref{geqn} that
\begin{equation}\label{grater1}
|N_\Sigma((u,i)^{\sigma\pi_\Sigma})\cap N_\Sigma((v,j)^{\sigma\pi_\Sigma})|\geqslant |N_\Sigma(i)\cap N_\Sigma(j)|=n_k.
\end{equation}

First assume that $k=0$. Then $(X_0(u,i))^\sigma=\{(u,i)^\sigma\}=X_0((u,i)^\sigma)$.

Next assume that $k=1$. By \eqref{inX}, we have $(v,j)^\sigma\in X((u,i)^\sigma)$. By Lemma~\ref{ijneq}, we then have
\[
(u,i)^{\sigma\pi_\Sigma}\neq(v,j)^{\sigma\pi_\Sigma},
\]
and so
\[
|N_\Sigma((u,i)^{\sigma\pi_\Sigma})\cap N_\Sigma((v,j)^{\sigma\pi_\Sigma})|\leqslant n_1.
\]
This together with~\eqref{grater1} implies that
\[
|N_\Sigma((u,i)^{\sigma\pi_\Sigma})\cap N_\Sigma((v,j)^{\sigma\pi_\Sigma})|=n_1.
\]
Thus $(v,j)^\sigma\in V(\Gamma)\times D_1((u,i)^{\sigma\pi_\Sigma})$. Combining this with~\eqref{inX}, we then have
\[
(v,j)^\sigma\in X((u,i)^\sigma)\cap \left(V(\Gamma)\times D_1((u,i)^{\sigma\pi_\Sigma})\right)=(X_1(u,i)^\sigma).
\]
Therefore,
\begin{equation}\label{oneside1}
(X_1(u,i))^\sigma\subseteq X_1((u,i)^\sigma).
\end{equation}
It then follows that
\[
(X_1((u,i)^\sigma))^{\sigma^{-1}}\subseteq X_1((u,i)^{\sigma\sigma^{-1}})=X_1(u,i).
\]
Thus
\[
|X_1((u,i)^\sigma)| \leqslant |(X_1(u,i))^\sigma|.
\]
This together with~\eqref{oneside1} implies $(X_1(u,i))^\sigma=X_1((u,i)^\sigma)$ as required.

Now assume that $k\geqslant2$. Suppose by induction that for all integers $\ell$ with $0 \leqslant\ell<k$ we have
\begin{equation}\label{ell}
(X_\ell(u,i))^\sigma=X_\ell((u,i)^\sigma).
\end{equation}
Since $(v,j)\in X_k(u,i)$, we have $(v,j)^\sigma\notin (\cup_{m=0}^{k-1}X_m(u,i))^\sigma$.
It then follows from~\eqref{ell} that
\[
(v,j)^\sigma\notin \cup_{m=0}^{k-1}(X_m(u,i))^\sigma=\cup_{m=0}^{k-1}X_m((u,i)^\sigma).
\]
That is,
\[
(v,j)^\sigma\notin X_m((u,i)^\sigma) \quad\text{for}\quad m=0,1,\dots,k-1.
\]
By the definition of $X_m((u,i)^\sigma$ and~\eqref{inX}, we then obtain that
\[
(v,j)^{\sigma\pi_\Sigma}\notin D_m((u,i)^{\sigma\pi_\Sigma}) \quad\text{for}\quad m=0,1,\dots,k-1.
\]
This together with~\eqref{grater1} implies that $(v,j)^{\sigma\pi_\Sigma}\in D_k((u,i)^{\sigma\pi_\Sigma})$, and hence we drive from~\eqref{inX} that
\[
(v,j)^\sigma\in X((u,i)^\sigma)\cap(V(\Gamma)\times D_k((u,i)^{\sigma\pi_\Sigma}))=X_k((u,i)^\sigma).
\]
Therefore,
$$
(X_k(u,i))^\sigma\subseteq X_k((u,i)^\sigma).
$$
Note that this holds for any $(u, i) \in V(\Gamma \times \Sigma)$ and $\sigma\in\Aut(\Gamma\times\Sigma)$. Replacing $(u,i)$ by $(u,i)^{\sigma}$ and $\sigma$ by $\sigma^{-1}$ in this inclusion, we obtain
$$
(X_k((u,i)^{\sigma}))^{\sigma^{-1}} \subseteq X_k(((u,i)^{\sigma})^{\sigma^{-1}}) = X_k(u,i),
$$
or equivalently,
$$
X_k((u,i)^\sigma) \subseteq (X_k(u,i))^\sigma.
$$
Therefore, $(X_k(u,i))^\sigma=X_k((u,i)^\sigma)$ and the proof is complete by induction.
\end{proof}

\subsection{\texorpdfstring{$Y(u,i)$ and $Y_k(u,i)$}{Y(u,i) and Yk(u,i)}}

\begin{lemma}\label{vs2}
Let $\Gamma$ and $\Sigma$ be regular graphs with coprime valencies. Suppose that $\Sigma$ is $\Gamma$-thin.
Then for any $(u,i)\in V(\Gamma\times\Sigma)$, we have 
$$
Y(u,i)=\{u\}\times N_{B(\Sigma)}(i).
$$
\end{lemma}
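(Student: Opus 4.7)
My plan is to prove both inclusions of the claimed equality directly, using the product formula for $f$ from Lemma \ref{vs} as the main tool. That identity reads
\[
f((u,i),(w,j)) = \frac{|N_\Gamma(u)\cap N_\Gamma(w)|}{\val(\Gamma)}\cdot\frac{|N_\Sigma(i)\cap N_\Sigma(j)|}{\val(\Sigma)},
\]
which splits cleanly into a $\Gamma$-factor and a $\Sigma$-factor; the $\Gamma$-factor lies in $[0,1]$ and attains its maximum value $1$ precisely when $N_\Gamma(w) = N_\Gamma(u)$, in particular at $w=u$.

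For the inclusion $\{u\} \times N_{B(\Sigma)}(i) \subseteq Y(u,i)$, I would take $j \in N_{B(\Sigma)}(i)$, note that $j \neq i$ and compute $\val(\Sigma)\cdot f((u,i),(u,j)) = |N_\Sigma(i) \cap N_\Sigma(j)|$, which is a positive integer by the definition of $B(\Sigma)$. This shows $(u,j) \in X(u,i)$. The inequality $f((u,i),(w,j)) \leqslant f((u,i),(u,j))$ for every $w \in V(\Gamma)$ is then immediate from the product formula and the bound $|N_\Gamma(u)\cap N_\Gamma(w)| \leqslant \val(\Gamma)$, so $(u,j) \in Y(u,i)$.

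For the reverse inclusion, let $(v,j) \in Y(u,i)$. Membership in $X(u,i)$ forces $f((u,i),(v,j)) > 0$, so both $|N_\Gamma(u)\cap N_\Gamma(v)|$ and $|N_\Sigma(i)\cap N_\Sigma(j)|$ are positive. Applying the maximality property defining $Y(u,i)$ with $w = u$ gives $f((u,i),(v,j)) \geqslant f((u,i),(u,j))$; dividing by the positive common factor $|N_\Sigma(i)\cap N_\Sigma(j)|/\val(\Sigma)$ reduces this to $|N_\Gamma(u) \cap N_\Gamma(v)| \geqslant \val(\Gamma)$, and since the reverse inequality is automatic, $N_\Gamma(v) = N_\Gamma(u)$. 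The $R$-thinness hypothesis on $\Gamma$ then gives $v = u$, the requirement $(v,j) \neq (u,i)$ forces $j \neq i$, and together with $N_\Sigma(i) \cap N_\Sigma(j) \neq \emptyset$ this places $j$ in $N_{B(\Sigma)}(i)$. There is no substantive obstacle here: the lemma is essentially a short bookkeeping computation, with the only real input being that the $w = u$ term already saturates the $\Gamma$-factor of $f$, so the maximality condition collapses onto the $\Sigma$-factor and the $R$-thinness of $\Gamma$ does the rest.
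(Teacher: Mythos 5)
Your proof is correct and follows essentially the same route as the paper's: both inclusions are obtained from the product formula of Lemma~\ref{vs}, with the $w=u$ term saturating the $\Gamma$-factor and the $R$-thinness of $\Gamma$ forcing $v=u$. The only cosmetic difference is that you deduce $j\neq i$ from $v=u$ together with $(v,j)\neq(u,i)$, whereas the paper invokes Lemma~\ref{ijneq}; incidentally this means your argument never actually uses the coprimality of the valencies.
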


\begin{proof}
Let $j\in N_{B(\Sigma)}(i)$. Then $i\neq j$ and $N_\Sigma(i)\cap N_\Sigma(j)\neq\emptyset$. Since $\Gamma$ and $\Sigma$ are regular, we drive from Lemma~\ref{vs} that
\begin{equation}\label{fsu}
\val(\Sigma)\cdot f((u,i),(u,j))=|N_\Sigma(i)\cap N_\Sigma(j)|\in\Nor.
\end{equation}
For any $w\in V(\Gamma)$, by Lemma~\ref{vs} and~\eqref{fsu} we have
\begin{align} 
\nonumber
\val(\Sigma)\cdot f((u,i),(w,j))&=\frac{|N_\Gamma(u)\cap N_\Gamma(w)|}{\val(\Gamma)}\cdot|N_\Sigma(i)\cap N_\Sigma(j)|\\ \nonumber
&\leqslant |N_\Sigma(i)\cap N_\Sigma(j)|\\ \nonumber
&=\val(\Sigma)\cdot f((u,i),(u,j)).\nonumber
\end{align}
Hence
\begin{equation}\label{uuw}
f((u,i),(u,j))\geqslant f((u,i),(w,j)) \ \text{ for all }\ w\in V(\Gamma).
\end{equation}
This together with~\eqref{fsu} implies that $(u,j)\in Y(u,i)$. Thus
\begin{equation}\label{l}
\{u\}\times N_{B(\Sigma)}(i)\subseteq Y(u,i).
\end{equation}

Conversely, let $(v,j)\in Y(u,i)$. Then $(v,j)\neq (u,i)$, $\val(\Sigma)\cdot f((u,i),(v,j))\in\Nor$, and
\begin{equation}\label{max2}
f((u,i),(v,j))\geqslant f((u,i),(w,j)) \text{ for all } w\in V(\Gamma).
\end{equation}
Moreover, by Lemma~\ref{vs}, we have
\begin{equation}\label{fsij}
\val(\Sigma)\cdot f((u,i),(v,j))=\frac{|N_\Gamma(u)\cap N_\Gamma(v)|}{\val(\Gamma)}\cdot|N_\Sigma(i)\cap N_\Sigma(j)|\in\Nor.
\end{equation}
It follows that $|N_\Sigma(i)\cap N_\Sigma(j)|\neq 0$, and so $N_\Sigma(i)\cap N_\Sigma(j) \neq \emptyset$. Since $(v,j)\in Y(u,i)\subseteq X(u,i)$, we derive from Lemma~\ref{ijneq} that $j\neq i$, and hence $j\in N_{B(\Sigma)}(i)$. Thus, using the same argument as in the first paragraph of this proof, we can derive that $(u, i), (u, j)$ and $(w, j)$ satisfy \eqref{uuw}. Combining this and \eqref{max2}, we then obtain $v=u$ and therefore $(v,j)\in \{u\}\times N_{B(\Sigma)}(i)$. So we have proved that
$$
Y(u,i)\subseteq\{u\}\times N_{B(\Sigma)}(i),
$$
which together with \eqref{l} yields $Y(u,i)=\{u\}\times N_{B(\Sigma)}(i)$, as required.
\end{proof}

\begin{lemma}\label{XY}
Let $\Gamma$ and $\Sigma$ be regular graphs with coprime valencies. Suppose that both $\Gamma$ and $\Sigma$ are $R$-thin. Then the following hold for any $(u,i)\in V(\Gamma\times\Sigma)$ and positive integer $k$:
\begin{enumerate}[{\rm (a)}]
\item $Y_k(u,i)\subseteq X_k(u,i)$;
\item $Y_k(u,i)=Y(u,i)\cap(V(\Gamma)\times D_k(i))$.
\end{enumerate}
\end{lemma}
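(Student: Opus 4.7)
The plan is to reduce both parts to direct comparisons with the test element $(u, z)$ for $z \in D_k(i)$, for which Lemma \ref{vs} gives the clean value $\val(\Sigma) \cdot f((u, i), (u, z)) = n_k$, and then to combine this with Lemma \ref{vs2} to pin down $Y_k(u, i)$ explicitly.

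For part (a), I would take $(v, j) \in Y_k(u, i)$ and locate $j$ within the partition $V(\Sigma) = \bigcup_{\ell=0}^{t+1} D_\ell(i)$. Since $(v, j) \in X(u, i)$ forces $\val(\Sigma) \cdot f((u, i), (v, j)) \in \Nor$, Lemma \ref{vs} yields $|N_\Sigma(i) \cap N_\Sigma(j)| > 0$, so $j \in D_\ell(i)$ for some $1 \le \ell \le t$. The assumption $(v, j) \notin \bigcup_{m=0}^{k-1} X_m(u, i)$ excludes $\ell < k$. To exclude $\ell > k$ (assuming $k \le t$; the range $k > t$ is trivial since both $X_k(u, i)$ and $Y_k(u, i)$ are then empty), I would pick any $z \in D_k(i)$, observe that $(u, z) \in X(u, i) \setminus \bigcup_{m=0}^{k-1} X_m(u, i)$, and apply the maximality built into the definition of $Y_k(u, i)$ to get
$$
\frac{n_k}{\val(\Sigma)} = f((u, i), (u, z)) \le f((u, i), (v, j)) \le \frac{n_\ell}{\val(\Sigma)},
$$
where the last inequality uses $|N_\Gamma(u) \cap N_\Gamma(v)| \le \val(\Gamma)$ together with Lemma \ref{vs}. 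Since $n_1 > n_2 > \cdots > n_t$, this forces $\ell = k$, so $(v, j) \in X_k(u, i)$.

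For part (b), Lemma \ref{vs2} gives $Y(u, i) = \{u\} \times N_{B(\Sigma)}(i)$, so $Y(u, i) \cap (V(\Gamma) \times D_k(i)) = \{u\} \times D_k(i)$, and it suffices to prove $Y_k(u, i) = \{u\} \times D_k(i)$. For the inclusion $\supseteq$, any $z \in D_k(i)$ satisfies $(u, z) \in X_k(u, i)$, and for every $(w, y) \in X(u, i) \setminus \bigcup_{m=0}^{k-1} X_m(u, i)$ the argument of (a) places $y$ in some $D_\ell(i)$ with $\ell \ge k$, giving $f((u, i), (w, y)) \le n_\ell/\val(\Sigma) \le n_k/\val(\Sigma) = f((u, i), (u, z))$. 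For the reverse inclusion, if $(v, j) \in Y_k(u, i)$ then (a) places $j \in D_k(i)$ and hence $(u, j) \in X(u, i) \setminus \bigcup_{m=0}^{k-1} X_m(u, i)$; applying the maximality condition with test element $(u, j)$ gives $|N_\Gamma(u) \cap N_\Gamma(v)| \ge \val(\Gamma)$, so $N_\Gamma(u) = N_\Gamma(v)$, and the $R$-thinness of $\Gamma$ forces $v = u$.

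The main obstacle is careful bookkeeping across the three chains of definitions ($D_k$, $X_k$, $Y_k$) and checking that the test vertex $(u, z) \in X_k(u, i)$ always exists; this is guaranteed because $D_k(i) \ne \emptyset$ for $1 \le k \le t$ by the choice of the integers $n_1 > \cdots > n_t$, and $(u, z) \in X(u, i)$ follows from Lemma \ref{vs}. Beyond this, there is no substantive difficulty: the entire argument is a systematic exploitation of Lemmas \ref{vs} and \ref{vs2} together with the monotonicity of the sequence $(n_k)$.
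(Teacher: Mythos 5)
Your proof is correct and follows essentially the same route as the paper's: both arguments hinge on comparing a candidate $(v,j)$ against the test element $(u,z)$ with $z\in D_k(i)$ via the maximality built into the definition of $Y_k(u,i)$, together with Lemmas~\ref{vs} and~\ref{vs2}, the monotonicity of $n_1>\cdots>n_t$, and the $R$-thinness of $\Gamma$. The only cosmetic differences are that you argue part~(a) directly rather than by contradiction and identify $Y(u,i)\cap(V(\Gamma)\times D_k(i))$ as $\{u\}\times D_k(i)$ up front; just note that your step placing $j$ in some $D_\ell(i)$ with $\ell\geqslant 1$ tacitly uses Lemma~\ref{ijneq} to exclude $j=i$, which the paper invokes explicitly.
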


\begin{proof}
If $Y_k(u,i)=\emptyset$, then statement~(a) is obvious. Now assume that  $Y_k(u,i)\neq\emptyset$ and let $(v,j)\in Y_k(u,i)$. Then
\begin{equation}\label{ya1}
(v,j)\in X(u,i)\setminus (\cup_{m=0}^{k-1}X_m(u,i))
\end{equation}
and
\begin{equation}\label{ya3}
f((u,i),(v,j))\geqslant f((u,i),(w,k)) \text{ for all $(w,k)\in X(u,i)\setminus (\cup_{m=0}^{k-1}X_m(u,i))
$}.
\end{equation}
Suppose by way of contradiction that $(v,j)\notin X_k(u,i)$.
Then by \eqref{ya1} we have
\[
(v,j)\in X(u,i) \text{ but } (v,j)\notin X_m(u,i) \text{ for } m=0,1,\dots,k-1,k.
\]
Since $X_m(u,i)=X(u,i)\cap (V(\Gamma)\times D_m(i))$, we have
\[
(v,j)\notin V(\Gamma)\times D_m(i)\quad\text{for}\quad m=0,1,\dots,k-1,k.
\]
So $j\in D_t(i)$ for some $t>k$. Thus $D_t(i)\neq\emptyset$ and so $D_k(i)\neq\emptyset$. Take $y\in D_k(i)$.
Then
\begin{equation}\label{le}
|N_\Sigma(i)\cap N_\Sigma(j)|<|N_\Sigma(i)\cap N_\Sigma(y)|
\end{equation}
and
\[
(u,y)\in X(u,i)\setminus (\cup_{m=0}^{k-1}X_m(u,i)).
\]
Using \eqref{le}, we can easily obtain $f((u,i),(v,j)) < f((u,i),(u,y))$, 
which contradicts \eqref{ya3}. This contradiction shows that $(v,j)\in X_k(u,i)$, and therefore $Y_k(u,i)\subseteq X_k(u,i)$. This completes the proof of statement~(a).

Now we prove statement~(b). Let $(v,j)\in Y_k(u,i)$. By statement~(a), we have
\begin{equation}\label{in1}
(v,j)\in X_k(u,i)=X(u,i)\cap(V(\Gamma)\times D_k(i))\subseteq X(u,i).
\end{equation}
It then follows from Lemmas~\ref{vs} and \ref{ijneq} that
$$
\val(\Sigma)\cdot f((u,i),(v,j))=\frac{|N_\Gamma(u)\cap N_\Gamma(v)|}{\val(\Gamma)}\cdot |N_\Sigma(i)\cap N_\Sigma(j)|\in\Nor
$$
and $i\neq j$.
Hence $N_\Sigma(i)\cap N_\Sigma(j) \ne \emptyset$. Using Lemma~\ref{vs}, it is straightforward to verify that
\begin{equation}\label{uN}
\val(\Sigma)\cdot f((u,i),(u,j))\in\Nor
\end{equation}
and
\begin{equation}\label{ug}
\val(\Sigma)\cdot f((u,i),(u,j))\geqslant\val(\Sigma)\cdot f((u,i),(v,j)).
\end{equation}
Moreover, by~\eqref{in1}, we have $j\in D_k(i)$, and so $j\notin \cup_{m=0}^{k-1}D_m(i)$. Since $i\neq j$, we derive from~\eqref{uN} that
\[
(u,j)\in X(u,i)\setminus (\cup_{m=0}^{k-1}X_m(u,i)).
\]
This combined with~\eqref{ya3} and~\eqref{ug} implies that $v=u$.
Since $N_\Sigma(i)\cap N_\Sigma(j) \ne \emptyset$, we have $j\in N_{B(\Sigma)}(i)$, and thus we obtain from Lemma~\ref{vs2} and~\eqref{in1} that $(v,j)\in Y(u,i)\cap V(\Gamma)\times D_k(i)$. Therefore,
\begin{equation}\label{leftside}
Y_k(u,i)\subseteq Y(u,i)\cap (V(\Gamma)\times D_k(i)).
\end{equation}

Conversely, let $(v,j)\in Y(u,i)\cap(V(\Gamma)\times D_k(i))$. Since $Y(u,i)\subseteq X(u,i)$, we have
\begin{equation}\label{y1}
(v,j)\in X(u,i)\cap(V(\Gamma)\times D_k(i))\subseteq X(u,i)\setminus (\cup_{m=0}^{k-1}X_m(u,i)). \end{equation}
Consider an arbitrary element $(w,t)$ of $X(u,i)\setminus (\cup_{m=0}^{k-1}X_m(u,i))$. Since
$X_m(u,i)=X(u,i)\cap (V(\Gamma)\times D_m(i))$,
we have
\[
(w,t)\notin X(u,i)\cap (V(\Gamma)\times D_m(i))\quad\text{for}\quad m=0,1,\dots,k-1.
\]
Since $(w,t)\in X(u,i)$, we deduce that
\[
t\notin D_m(i)\quad\text{for}\quad m=0,1,\dots,k-1,
\]
and hence $j\notin \cup_{m=0}^{k-1}D_m(i)$.
Note that $(v,j)\in D_k(i)$.
It follows that
\begin{equation}\label{tj}
|N_\Sigma(i)\cap N_\Sigma(j)|\geqslant |N_\Sigma(i)\cap N_\Sigma(t)|.
\end{equation}
Since $(v,j)\in Y(u,i)$, by Lemma~\ref{vs2} we obtain that $u=v$. Thus
\[
|N_\Gamma(u)\cap N_\Gamma(v)|=\val(\Gamma)\geqslant |N_\Gamma(u)\cap N_\Gamma(w)|.
\]
This together with~\eqref{tj} and Lemma~\ref{vs} implies that
\begin{align*}
\val(\Sigma)\cdot f((u,i),(v,j))&=|N_\Sigma(i)\cap N_\Sigma(j)|\\
&\geqslant\frac{|N_\Gamma(u)\cap N_\Gamma(w)|}{\val(\Gamma)}\cdot|N_\Sigma(i)\cap N_\Sigma(t)|\\
&=\val(\Sigma)\cdot f((u,i),(w,t)).
\end{align*}
Combining this with~\eqref{y1}, we obtain $(v,j)\in Y_k(u,i)$. Thus
\[
Y(u,i)\cap(V(\Gamma)\times D_k(i))\subseteq Y_k(u,i).
\]
This together with \eqref{leftside} completes the proof of statement~(b).
\end{proof}



\begin{lemma}\label{hom}
Let $\Gamma$ and $\Sigma$ be regular graphs with coprime valencies. Suppose that both $\Gamma$ and $\Sigma$ are $R$-thin. Suppose further that $\Sigma$ is vertex-transitive. Then for any $(u,i)\in V(\Gamma\times\Sigma)$ and $\sigma\in\Aut(\Gamma\times\Sigma)$, we have 
$$
(Y(u,i))^\sigma=Y((u,i)^\sigma).
$$
\end{lemma}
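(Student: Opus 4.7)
The plan is to decompose $Y(u,i)$ as the union of the pieces $Y_k(u,i)$ for $k=1,\ldots,t$ and then show $\sigma$-equivariance piece by piece, using Lemma~\ref{Xk} to handle the $X$-level data and the trivial fact that $\sigma$ preserves $f$ to handle the maximality condition.

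I would begin with the observation that, for all $(u,i),(v,j)\in V(\Gamma\times\Sigma)$ and every $\sigma\in\Aut(\Gamma\times\Sigma)$,
\[
f((u,i),(v,j))=f((u,i)^\sigma,(v,j)^\sigma),
\]
because $\sigma$ sends $N_{\Gamma\times\Sigma}((u,i))$ bijectively to $N_{\Gamma\times\Sigma}((u,i)^\sigma)$ (and similarly for $(v,j)$), preserving both the valencies and the size of the intersection of neighbourhoods.

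Next I would prove that $(Y_k(u,i))^\sigma=Y_k((u,i)^\sigma)$ for every $k\geqslant 1$. By Lemma~\ref{Xk} applied for $m=0,1,\ldots,k-1$, $\sigma$ restricts to a bijection
\[
X(u,i)\setminus\bigcup_{m=0}^{k-1}X_m(u,i)\;\longrightarrow\;X((u,i)^\sigma)\setminus\bigcup_{m=0}^{k-1}X_m((u,i)^\sigma).
\]
Since $\sigma$ preserves $f$, the elements of the left-hand set that maximize $f((u,i),\cdot)$ are mapped bijectively onto the elements of the right-hand set that maximize $f((u,i)^\sigma,\cdot)$; comparing with the definition of $Y_k$ gives the required equality. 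Finally, by Lemma~\ref{vs2}, $Y(u,i)=\{u\}\times N_{B(\Sigma)}(i)=\{u\}\times\bigcup_{k=1}^{t}D_k(i)$, so $Y(u,i)\subseteq V(\Gamma)\times\bigcup_{k=1}^{t}D_k(i)$; Lemma~\ref{XY}(b) gives $Y_k(u,i)=Y(u,i)\cap(V(\Gamma)\times D_k(i))$ for each $k\geqslant 1$, and taking the union over $k$ yields $Y(u,i)=\bigcup_{k=1}^{t}Y_k(u,i)$. Applying $\sigma$ to this decomposition and using the preceding step gives
\[
(Y(u,i))^\sigma=\bigcup_{k=1}^{t}(Y_k(u,i))^\sigma=\bigcup_{k=1}^{t}Y_k((u,i)^\sigma)=Y((u,i)^\sigma),
\]
as required.

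The main obstacle is the mismatch between the definition of $Y(u,i)$, whose maximality condition compares $f((u,i),(v,j))$ with $f((u,i),(w,j))$ only for $w\in V(\Gamma)$ sharing the same second coordinate $j$, and the action of $\sigma$, which in general does not preserve the fibre structure $V(\Gamma)\times\{j\}$. Passing through the refined pieces $Y_k(u,i)$ trades this fibre-based condition for one formulated in terms of $X(u,i)$ and $X_m(u,i)$, both of which are $\sigma$-equivariant by Lemmas~\ref{X} and~\ref{Xk}; this is precisely what allows the argument to succeed.
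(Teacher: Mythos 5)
Your proposal is correct and follows essentially the same route as the paper's proof: decompose $Y(u,i)$ as $\bigcup_{k=1}^{t}Y_k(u,i)$ via Lemmas~\ref{vs2} and~\ref{XY}(b), establish $(Y_k(u,i))^\sigma=Y_k((u,i)^\sigma)$ from the $\sigma$-equivariance of $X(u,i)$ and the $X_m(u,i)$ (Lemmas~\ref{X} and~\ref{Xk}) together with the invariance of $f$ under $\sigma$, and then take the union. The only cosmetic difference is that the paper derives the decomposition by partitioning $V(\Sigma)$ into $D_0(i),\dots,D_{t+1}(i)$ and checking that $Y(u,i)$ misses the $D_0$ and $D_{t+1}$ fibres, whereas you read it off directly from $Y(u,i)=\{u\}\times N_{B(\Sigma)}(i)$; both amount to the same thing.
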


\begin{proof}
Since $\Sigma$ is finite, there exists a positive integer $t$ such that $V(\Sigma)=\cup_{m=0}^{t+1}D_m(i)$,
where as before $D_0(i)=\{i\}$ and $D_{t+1}(i)=\{x\in V(\Sigma)\mid N_\Sigma(i)\cap N_\Sigma(x)=\emptyset\}$.
Since $Y(u,i)\subseteq X(u,i)$, by Lemma~\ref{ijneq}, we have
\begin{equation}\label{00}
Y(u,i)\cap (V(\Gamma)\times D_0(i))=\emptyset.
\end{equation}
For any $(v,j)\in V(\Gamma)\times D_{t+1}(i)$, we have $j\notin N_{B(\Sigma)}(i)$ as $N_\Sigma(i)\cap N_\Sigma(j)=\emptyset$. By Lemma~\ref{vs2}, we then have $(v,j)\notin Y(u,i)$ and therefore
\begin{equation}\label{11}
Y(u,i)\cap (V(\Gamma)\times D_{t+1}(i))=\emptyset.
\end{equation}
Note that
\begin{align*}
V(\Gamma\times\Sigma) & = V(\Gamma)\times V(\Sigma)\\
& = V(\Gamma)\times \left(\cup_{m=0}^{t+1}D_m(i)\right)\\
& = \cup_{m=0}^{t+1}(V(\Gamma)\times D_m(i))\\
& = \left(\cup_{m=1}^{t}(V(\Gamma)\times D_m(i))\right)\cup \left(V(\Gamma)\times D_0(i)\right) \cup \left(V(\Gamma)\times D_{t+1}(i)\right).
\end{align*}
Thus, by \eqref{00}, \eqref{11} and Lemma~\ref{XY}(b), we have 
\begin{align*}
Y(u,i)&=Y(u,i)\cap V(\Gamma\times\Sigma)\\
&=Y(u,i)\cap\left(\cup_{m=1}^{t}(V(\Gamma)\times D_m(i))\right)\\
&=\cup_{m=1}^{t}(Y(u,i)\cap(V(\Gamma)\times D_m(i))\\
&=\cup_{m=1}^t Y_m(u,i).
\end{align*}

Since $\sigma\in\Aut(\Gamma\times\Sigma)$, by Lemmas~\ref{X} and \ref{Xk}, we obtain that for each positive integer $k$,
\begin{align*}
\left(X(u,i)\setminus \left(\cup_{m=0}^{k-1}X_m(u,i)\right)\right)^\sigma & = (X(u,i))^\sigma \setminus \left(\cup_{m=0}^{k-1}(X_m(u,i))^\sigma\right)\\
& = X((u,i)^\sigma)\setminus\left(\cup_{m=0}^{k-1}X_m((u,i)^\sigma)\right).
\end{align*}
Thus $(Y_k(u,i))^\sigma=Y_k((u,i)^\sigma)$ by the definition of $Y_k(u,i)$.
Therefore, 
\[
(Y(u,i))^\sigma=\left(\cup_{m=1}^tY_m(u,i)\right)^\sigma = \cup_{m=1}^t (Y_m(u,i))^\sigma = \cup_{m=1}^tY_m((u,i)^\sigma)=Y((u,i)^\sigma),
\]
completing the proof.
\end{proof}

\section{Proof of Theorem~\ref{main-theorem2}}\label{ProofTh2}

\subsection{Lemmas}

We need the following lemmas in our proof of Theorem~\ref{main-theorem2}.

\begin{lemma}\label{add2}
Let $\Gamma$ and $\Sigma$ be regular graphs with coprime valencies. Suppose that both $\Gamma$ and $\Sigma$ are $R$-thin. Suppose further that $\Sigma$ is vertex-transitive. Let $u\in V(\Gamma)$ and $i,j\in V(\Sigma)$. If $N_\Sigma(i)\cap N_\Sigma(j)\neq\emptyset$, then for any $\sigma\in\Aut(\Gamma\times\Sigma)$, we have 
$$
(u,i)^{\sigma\pi_\Gamma}=(u,j)^{\sigma\pi_\Gamma}.
$$
\end{lemma}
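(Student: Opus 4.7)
The plan is to reduce the statement quickly to the preceding lemmas, especially Lemma~\ref{vs2} (which computes $Y(u,i)$ explicitly as $\{u\}\times N_{B(\Sigma)}(i)$) and Lemma~\ref{hom} (which says $Y$ is $\sigma$-equivariant). The hypothesis $N_\Sigma(i)\cap N_\Sigma(j)\neq\emptyset$ is exactly what lets us locate $(u,j)$ inside $Y(u,i)$, and $Y$-equivariance then forces the $\Gamma$-coordinates of $(u,i)^\sigma$ and $(u,j)^\sigma$ to coincide, because $Y$ of any vertex lives in a single fiber of $\pi_\Gamma$.

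First, dispose of the trivial case $i=j$, in which the conclusion holds tautologically. So assume $i\neq j$. Since $N_\Sigma(i)\cap N_\Sigma(j)\neq\emptyset$, we have $j\in N_{B(\Sigma)}(i)$. Apply Lemma~\ref{vs2} to obtain
\[
(u,j)\in \{u\}\times N_{B(\Sigma)}(i)=Y(u,i).
\]

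Next, write $(u,i)^\sigma=(u',i')$ for some $u'\in V(\Gamma)$ and $i'\in V(\Sigma)$, so that $(u,i)^{\sigma\pi_\Gamma}=u'$. By Lemma~\ref{hom},
\[
(u,j)^\sigma \in (Y(u,i))^\sigma = Y((u,i)^\sigma) = Y(u',i').
\]
Applying Lemma~\ref{vs2} a second time to the vertex $(u',i')$ yields $Y(u',i')=\{u'\}\times N_{B(\Sigma)}(i')$, which is contained in $\{u'\}\times V(\Sigma)$. Consequently $(u,j)^\sigma$ has first coordinate $u'$, i.e.\ $(u,j)^{\sigma\pi_\Gamma}=u'=(u,i)^{\sigma\pi_\Gamma}$, as required.

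There is no real obstacle here: all the analytical work was done in Lemmas~\ref{vs2} and~\ref{hom}, which together already encode exactly this kind of equivariance. The only mild subtlety is making sure the hypotheses of both lemmas are in force; the standing assumptions of Lemma~\ref{add2} (regular graphs with coprime valencies, both $R$-thin, and $\Sigma$ vertex-transitive) cover them, and the condition $N_\Sigma(i)\cap N_\Sigma(j)\neq\emptyset$ (with $i\neq j$) is precisely $j\in N_{B(\Sigma)}(i)$, which is what Lemma~\ref{vs2} needs to place $(u,j)$ in $Y(u,i)$.
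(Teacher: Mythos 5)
Your proof is correct and follows essentially the same route as the paper: handle $i=j$ trivially, use Lemma~\ref{vs2} to place $(u,j)$ in $Y(u,i)$, apply the equivariance from Lemma~\ref{hom}, and then use Lemma~\ref{vs2} again at $(u,i)^\sigma$ to read off the $\Gamma$-coordinate. Nothing further is needed.
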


\begin{proof}
If $i=j$, then $(u,i)^\sigma=(u,j)^\sigma$ and so the statement is true. Now assume that $i\neq j$ and $N_\Sigma(i)\cap N_\Sigma(j)\neq\emptyset$. Then $j\in N_{B(\Sigma)}(i)$. By Lemma~\ref{vs2}, we then have $(u,j)\in Y(u,i)$. Hence 
$(u,j)^\sigma\in (Y(u,i))^\sigma=Y((u,i)^\sigma)$ by Lemma~\ref{hom}.
This together with Lemma~\ref{vs2} implies that $(u,i)^{\sigma\pi_\Gamma}=(u,j)^{\sigma\pi_\Gamma}$, as required.
\end{proof}

\begin{lemma} 
\label{even}
Let $\Gamma$ and $\Sigma$ be regular graphs with coprime valencies. Suppose that both $\Gamma$ and $\Sigma$ are $R$-thin. Suppose further that $\Sigma$ is vertex-transitive. Let $u,v\in V(\Gamma)$ and $i\in V(\Sigma)$. If there is a walk from $u$ to $v$ of even length, then for any $\sigma\in\Aut(\Gamma\times\Sigma)$, we have 
\begin{equation}
\label{eq:even}
(u,i)^{\sigma\pi_\Sigma}=(v,i)^{\sigma\pi_\Sigma}.
\end{equation}
\end{lemma}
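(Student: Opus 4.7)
The plan is to proceed by induction on the length $2k$ of an even-length walk from $u$ to $v$ in $\Gamma$. The base case $k=0$ gives $u=v$ and the equality~\eqref{eq:even} is immediate. For $k\geq 1$, a walk $u=w_0\sim w_1\sim w_2\sim\cdots\sim w_{2k}=v$ decomposes into a length-two walk $u\sim w_1\sim w_2$ and a walk $w_2\sim w_3\sim\cdots\sim w_{2k}=v$ of even length $2k-2$. By the inductive hypothesis applied to the latter, $(w_2,i)^{\sigma\pi_\Sigma}=(v,i)^{\sigma\pi_\Sigma}$, so it suffices to establish the length-two case: for any $u,w,v\in V(\Gamma)$ with $u\sim w$ and $w\sim v$ in $\Gamma$, and any $\sigma\in\Aut(\Gamma\times\Sigma)$, we have $(u,i)^{\sigma\pi_\Sigma}=(v,i)^{\sigma\pi_\Sigma}$.

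To handle the length-two case, set $p=(u,i)^{\sigma\pi_\Sigma}$ and $q=(v,i)^{\sigma\pi_\Sigma}$. For each $j\in N_\Sigma(i)$, the adjacencies $u\sim w$ and $i\sim j$ yield $(u,i)\sim(w,j)$ in $\Gamma\times\Sigma$, and likewise $(v,i)\sim(w,j)$. Since $\sigma$ preserves adjacency, projecting onto $V(\Sigma)$ gives
\[
(w,j)^{\sigma\pi_\Sigma}\in N_\Sigma(p)\cap N_\Sigma(q) \quad\text{for every }j\in N_\Sigma(i).
\]
Moreover, any two distinct $j,j'\in N_\Sigma(i)$ share $i$ as a common neighbour in $\Sigma$, so $\{j,j'\}$ is an edge of $B(\Sigma)$; Lemma~\ref{different} applied with the vertex $w$ then gives $(w,j)^{\sigma\pi_\Sigma}\neq(w,j')^{\sigma\pi_\Sigma}$. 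Thus the map $j\mapsto(w,j)^{\sigma\pi_\Sigma}$ is injective on $N_\Sigma(i)$, producing $\val(\Sigma)$ distinct elements of $N_\Sigma(p)\cap N_\Sigma(q)$.

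Consequently $|N_\Sigma(p)\cap N_\Sigma(q)|\geq\val(\Sigma)=|N_\Sigma(p)|=|N_\Sigma(q)|$, which forces $N_\Sigma(p)=N_\Sigma(q)$. Since $\Sigma$ is $R$-thin, this implies $p=q$, which is exactly~\eqref{eq:even}. The argument is largely mechanical once Lemma~\ref{different} is available; the main thing to be careful about is the bookkeeping in the length-two reduction and the observation that the edge $\{j,j'\}$ we feed into Lemma~\ref{different} lives in $B(\Sigma)$ rather than in $\Sigma$ itself, which is precisely why the common-neighbour vertex $i$ is invoked. The coprimality of the valencies and the $R$-thinness of $\Sigma$ enter exclusively through that lemma.
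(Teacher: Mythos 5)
Your proof is correct, and its overall skeleton matches the paper's: reduce to the case where $u$ and $v$ have a common neighbour $w$, push the set $\{w\}\times N_\Sigma(i)$ through $\sigma$ into $N_{\Gamma\times\Sigma}((u,i)^\sigma)\cap N_{\Gamma\times\Sigma}((v,i)^\sigma)$, show the $\pi_\Sigma$-projection of the image has full size $\val(\Sigma)$, conclude $N_\Sigma(p)=N_\Sigma(q)$ by counting, and finish with $R$-thinness of $\Sigma$. The one place you genuinely diverge is the injectivity step. The paper proves $(w,j)^{\sigma\pi_\Sigma}\neq(w,k)^{\sigma\pi_\Sigma}$ for distinct $j,k\in N_\Sigma(i)$ by first invoking Lemma~\ref{add2} to get $(w,j)^{\sigma\pi_\Gamma}=(w,k)^{\sigma\pi_\Gamma}$ and then using injectivity of $\sigma$; Lemma~\ref{add2} in turn rests on the $Y(u,i)$ machinery (Lemmas~\ref{vs2} and~\ref{hom}) and is where the vertex-transitivity of $\Sigma$ is consumed. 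You instead observe that $\{j,k\}$ is an edge of $B(\Sigma)$ (via the common neighbour $i$) and apply Lemma~\ref{different} directly, which needs only regularity, coprime valencies and $R$-thinness of $\Sigma$. This is a cleaner route: it bypasses the heaviest lemmas in Section~\ref{GraphsCoprime} and, as a bonus, shows that the vertex-transitivity hypothesis on $\Sigma$ is not actually needed for Lemma~\ref{even} itself. One small inaccuracy in your closing commentary: the $R$-thinness of $\Sigma$ does not enter \emph{exclusively} through Lemma~\ref{different}; you also use it at the final step to pass from $N_\Sigma(p)=N_\Sigma(q)$ to $p=q$, as your own text correctly does two sentences earlier.
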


\begin{proof}
It suffices to prove \eqref{eq:even} in the case when $N_\Gamma(u)\cap N_\Gamma(v)\neq\emptyset$. The result in the general case follows by applying this equation to pairs of every other vertices on the walk.  

So let us assume that $N_\Gamma(u)\cap N_\Gamma(v)\neq\emptyset$. Take a vertex $w\in N_\Gamma(u)\cap N_\Gamma(v)$. Set $W=\{w\}\times N_\Sigma(i)$. Then $W\subseteq N_{\Gamma\times\Sigma}((u,i))\cap N_{\Gamma\times\Sigma}((v,i))$.
Since $\sigma\in\Aut(\Gamma\times\Sigma)$, it follows that
\[
W^\sigma\subseteq N_{\Gamma\times\Sigma}((u,i)^\sigma)\cap N_{\Gamma\times\Sigma}((v,i)^\sigma).
\]
Consequently, 
\begin{equation}\label{nei}
W^{\sigma\pi_\Sigma}\subseteq N_{\Sigma}((u,i)^{\sigma\pi_\Sigma})\cap N_{\Sigma}((v,i)^{\sigma\pi_\Sigma}).
\end{equation}
Note that
\begin{equation}\label{val}
|W^\sigma|=|W|=|N_\Sigma(i)|=|N_{\Sigma}((u,i)^{\sigma\pi_\Sigma})|=|N_{\Sigma}((v,i)^{\sigma\pi_\Sigma})|=\val(\Sigma).
\end{equation}
Consider any two distinct vertices $(w,j), (w,k)$ in $W$. We have $j\neq k\in N_\Sigma(i)$ and $i\in N_\Sigma(j)\cap N_\Sigma(k)$, whence $N_\Sigma(j)\cap N_\Sigma(k)\neq\emptyset$. By Lemma~\ref{add2}, we have $(w,j)^{\sigma\pi_\Gamma}=(w,k)^{\sigma\pi_\Gamma}$. Since $(w,j)\neq(w,k)$ and $\sigma\in\Aut(\Gamma\times\Sigma)$, we then obtain that $(w,j)^{\sigma\pi_\Sigma}\neq(w,k)^{\sigma\pi_\Sigma}$. Hence $|W^{\sigma\pi_\Sigma}|=|W^\sigma|$. Combining this with \eqref{nei} and \eqref{val}, we obtain that
\[
N_{\Sigma}((u,i)^{\sigma\pi_\Sigma})=N_{\Sigma}((v,i)^{\sigma\pi_\Sigma})=W^{\sigma\pi_\Sigma}.
\]
Since $\Sigma$ is $R$-thin, we then conclude that $(u,i)^{\sigma\pi_\Sigma}=(v,i)^{\sigma\pi_\Sigma}$.
\end{proof}
 
For a graph $\Delta$ and a partition $\calB$ of $V(\Delta)$, the \emph{quotient graph} $\Delta_\calB$ of $\Delta$ with respect to $\calB$ is defined to have vertex set $\calB$ such that two blocks $B, C \in \calB$ are adjacent if and only if there exists at least one edge of $\Delta$ with one end-vertex in $B$ and the other end-vertex in $C$. 

\begin{lemma}\label{Gamma-bipartite}
Let $\Gamma$ and $\Sigma$ be connected regular graphs with coprime valencies. Suppose that both $\Gamma$ and $\Sigma$ are $R$-thin. Suppose further that $\Gamma$ is bipartite and $\Sigma$ is vertex-transitive and non-bipartite. Then 
$$
\Aut(\Gamma\times\Sigma)=P(\Gamma,\Sigma).
$$
\end{lemma}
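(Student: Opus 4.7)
The plan is to prove every $\sigma \in \Aut(\Gamma \times \Sigma)$ lies in $P(\Gamma, \Sigma)$, i.e., $\sigma$ permutes the fibres $V(\Gamma) \times \{i\}$ ($i \in V(\Sigma)$); equivalently, $(u,i)^{\sigma\pi_\Sigma} = (v,i)^{\sigma\pi_\Sigma}$ for all $u, v \in V(\Gamma)$ and $i \in V(\Sigma)$. The argument proceeds in three steps, exploiting the lemmas of Section~\ref{GraphsCoprime}.

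First, I would extract a product decomposition of $\sigma$. Since $\Sigma$ is connected and non-bipartite, any two of its vertices are joined by a walk of even length, so the Boolean square $B(\Sigma)$ is connected (any even-length walk $v_0, \dots, v_{2k}$ in $\Sigma$ projects to a walk $v_0, v_2, \dots, v_{2k}$ in $B(\Sigma)$). Iterating Lemma~\ref{add2} along a path in $B(\Sigma)$ joining $i$ and $j$ yields $(u,i)^{\sigma\pi_\Gamma} = (u,j)^{\sigma\pi_\Gamma}$ for all $i, j \in V(\Sigma)$; denote this common value by $\psi(u)$. A short check using that $\sigma$ is a bijection and sends disjoint fibres $\{u\} \times V(\Sigma)$ to disjoint subsets of equal size shows $\psi \in \Sym(V(\Gamma))$, and adjacency preservation forces $\psi \in \Aut(\Gamma)$. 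Hence $\sigma(u,i) = (\psi(u), \chi_u(i))$ for some $\chi_u \in \Sym(V(\Sigma))$.

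Second, I would invoke the bipartite structure of $\Gamma$. Let $B_1, B_2$ be the parts of $\Gamma$; two vertices of the connected bipartite graph $\Gamma$ are joined by a walk of even length precisely when they lie in a common part. By Lemma~\ref{even}, $\chi_u = \chi_v$ whenever $u, v$ lie in the same part, so there exist permutations $\chi_1, \chi_2 \in \Sym(V(\Sigma))$ with $\chi_u = \chi_s$ for $u \in B_s$.

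Finally, I must force $\chi_1 = \chi_2$, which will give $\sigma \in P(\Gamma, \Sigma)$. Choosing any edge $\{u,v\}$ of $\Gamma$ with $u \in B_1$ and $v \in B_2$, the adjacency-preservation property of $\sigma$ on edges of $\Gamma \times \Sigma$ translates into the biconditional $\{i,j\} \in E(\Sigma) \iff \{\chi_1(i), \chi_2(j)\} \in E(\Sigma)$, i.e., $(\chi_1, \chi_2)$ is a two-fold automorphism of $\Sigma$. To collapse this to $\chi_1 = \chi_2$, I would pick an odd closed walk $i_0, i_1, \ldots, i_{2m+1} = i_0$ in $\Sigma$ (available since $\Sigma$ is non-bipartite), trace the walk $(u, i_0), (v, i_1), (u, i_2), \ldots, (v, i_{2m+1})$ in $\Gamma \times \Sigma$ under $\sigma$, and combine the $R$-thinness and vertex-transitivity of $\Sigma$ with the coprime-valency hypothesis in a neighborhood-counting argument in the spirit of Lemmas~\ref{different} and~\ref{geqn}. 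This third step is the main obstacle: the bipartite rigidity of $\Gamma$ a priori permits $\chi_u$ to split into two independent permutations across the two parts, and it is the combination of the odd-cycle content of $\Sigma$ with the coprime valencies and $R$-thinness of both factors that must be leveraged to eliminate this flexibility.
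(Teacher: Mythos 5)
Your first two steps are sound and essentially reproduce what the paper establishes before its main argument: connectivity of $B(\Sigma)$ (which follows from $\Sigma$ being connected and non-bipartite) combined with Lemma~\ref{add2} shows the $\Gamma$-coordinate of $(u,i)^\sigma$ is independent of $i$, and Lemma~\ref{even} applied to even walks in the connected bipartite graph $\Gamma$ shows the induced permutation $\chi_u$ of $V(\Sigma)$ depends only on the part of $\Gamma$ containing $u$. So far, so good, and this matches the first half of the paper's proof (which phrases the second point as $\sigma$ preserving the partition $\calD=\{B_0\times\{j\},B_1\times\{j\}\mid j\in V(\Sigma)\}$).

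The difficulty is that your third step, the only one that goes beyond the preparatory lemmas, is not a proof: you identify the target ($\chi_1=\chi_2$) and name the hypotheses that ``must be leveraged,'' but no argument is given, and you concede it is ``the main obstacle.'' This is a genuine gap, and it is not one that a counting argument ``in the spirit of Lemmas~\ref{different} and~\ref{geqn}'' can fill. Your own reduction shows why: you correctly deduce that $(\chi_1,\chi_2)$ is a two-fold automorphism of $\Sigma$, but the implication runs the other way as well --- for \emph{any} two-fold automorphism $(\chi_1,\chi_2)$ of $\Sigma$, the map sending $(u,i)$ to $(u,i^{\chi_s})$ for $u\in B_s$ is an automorphism of $\Gamma\times\Sigma$ (every edge of $\Gamma$ joins $B_1$ to $B_2$, so the two-fold condition is exactly what adjacency preservation requires), and it lies outside $P(\Gamma,\Sigma)$ whenever $\chi_1\neq\chi_2$. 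Such a map is consistent with Lemmas~\ref{add2}, \ref{even}, \ref{different} and~\ref{geqn}, so no further local neighbourhood counting can exclude it; completing your step~3 is in fact equivalent to showing that $\Sigma$ admits no nontrivial two-fold automorphism, i.e.\ that $\Sigma$ is stable, which is a global property not visibly implied by the stated hypotheses (the introduction itself cites the existence of nontrivially unstable vertex-transitive graphs). The paper closes this step by a different device --- it passes to the quotient $\Delta_\calD\cong\Sigma\times K_2$, lifts an odd cycle of $\Sigma$ through $i$ to a cycle $C_\calD$ of doubled length with $B_0\times\{i\}$ and $B_1\times\{i\}$ antipodal, and argues that $\sigma$ carries antipodal pairs to antipodal pairs projecting to a single vertex of $\Sigma$. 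You should study that antipodal argument carefully, because it is precisely the point where all of the content of the lemma is concentrated, and your analysis shows that whatever replaces your step~3 must do genuinely more than neighbourhood counting.
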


\begin{proof}
Let $B_0$ and $B_1$ be the biparts of $\Gamma$, and let $V(\Sigma)=\{1,2,\dots,n\}$. Let $u,v\in V(\Gamma)$ and $i\in V(\Sigma)$, and let $\sigma$ be an arbitrary element of $\Aut(\Gamma\times\Sigma)$. Let 
$$
\Delta = \Gamma\times\Sigma.
$$ 
Set
\[
\calB=\{V(\Gamma)\times\{j\}\mid j\in V(\Sigma)\}
\]
and 
\[
\calD=\{B_0\times\{j\}, B_1\times\{j\}\mid j\in V(\Sigma)\}. 
\]
Then $\calB$ and $\calD$ are partitions of $V(\Delta)$, and hence $\Delta_{\calB}$ and $\Delta_{\calD}$ are well defined.

First assume that $u$ and $v$ are in the same biparts of $\Gamma$. Since $\Gamma$ is connected and bipartite, there is a walk in $\Gamma$ from $u$ to $v$ of even length. It then follows from Lemma \ref{even} that $(u,i)^{\sigma\pi_\Sigma}=(v,i)^{\sigma\pi_\Sigma}$. Hence $\sigma$ preserves $\calD$, and so $\sigma\in\Aut(\Delta_\calD)$.

Next assume that $u$ and $v$ are in different biparts of $\Gamma$. Without loss generality we may assume that $u\in B_0$ and $v\in B_1$. Since $\Sigma$ is non-bipartite and vertex-transitive, every vertex of $\Sigma$ is contained in an odd cycle, and so there exists a cycle of odd length containing $i$, say, $C: i_1,i_2,\dots,i_\ell,i_1$, where $i=i_1$ and $\ell$ is odd. It follows that
\[
C_\calB: V(\Gamma)\times\{i_1\},V(\Gamma)\times\{i_2\},\dots,V(\Gamma)\times\{i_\ell\}, V(\Gamma)\times\{i_1\}
\]
and
\[
C_\calD: B_0\times\{i_1\},B_1\times\{i_2\},\dots,B_0\times\{i_\ell\}, B_1\times\{i_1\},B_0\times\{i_2\},\dots,B_1\times\{i_\ell\}, B_0\times\{i_1\}
\]
are cycles in $\Delta_\calB$ and $\Delta_\calD$, respectively.
Note that $C_\calD$ is of length $2\ell$. Since $\sigma\in\Aut(\Delta_\calD)$ as shown above, 
$\sigma$ maps $C_\calD$ to the cycle $(C_\calD)^\sigma$ of $\Delta_\calD$, and also maps the pair of antipodals $B_0\times\{i\}, B_1\times\{i\}$ to some pair of antipodals in $(C_\calD)^\sigma$.
Note also that for each pair of antipodals $X, Y$ of some cycle in $\Delta_\calD$, we have $X^{\pi_\Sigma}=Y^{\pi_\Sigma}$ and $|X^{\pi_\Sigma}|=|Y^{\pi_\Sigma}|=1$. In particular, we have
\[
(B_0\times\{i\})^{\sigma\pi_\Sigma}=(B_1\times\{i\})^{\sigma\pi_\Sigma}
\]
and
 \[
 \quad |(B_0\times\{i\})^{\sigma\pi_\Sigma}|=|(B_1\times\{i\})^{\sigma\pi_\Sigma} |=1.
\]
Since $u\in B_0$ and $v\in B_1$, we then obtain that $(u,i)^{\sigma\pi_\Sigma}=(v,i)^{\sigma\pi_\Sigma}$. Since this holds for any $u,v\in V(\Gamma)$ and $i\in V(\Sigma)$, we conclude that $\sigma\in P(\Gamma,\Sigma)$. Since this holds for any $\sigma \in \Aut(\Gamma\times\Sigma)$, we obtain that $\Aut(\Gamma\times\Sigma)\leqslant P(\Gamma,\Sigma)$, which yields $\Aut(\Gamma\times\Sigma)=P(\Gamma,\Sigma)$.
\end{proof}

\begin{lemma}
\label{Sigma-bipartite}
Let $\Gamma$ and $\Sigma$ be connected regular graphs with coprime valencies. Suppose that both $\Gamma$ and $\Sigma$ are $R$-thin. Suppose further that $\Gamma$ is non-bipartite and $\Sigma$ is vertex-transitive and bipartite. Then 
$$
\Aut(\Gamma\times\Sigma)=P(\Gamma,\Sigma).
$$
\end{lemma}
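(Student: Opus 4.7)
The plan is to derive this lemma as an almost immediate corollary of Lemma \ref{even}, exploiting the fact that $\Gamma$ is connected and non-bipartite. Unlike the proof of Lemma \ref{Gamma-bipartite}, which splits into two cases according to whether $u$ and $v$ lie in the same bipart of $\Gamma$, here we have no such split because every pair of vertices in a connected non-bipartite graph is joined by a walk of even length.

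More precisely, I would proceed as follows. Fix an arbitrary $\sigma \in \Aut(\Gamma\times\Sigma)$ and an arbitrary $i\in V(\Sigma)$. Since $\Gamma$ is connected and contains an odd cycle, for any two vertices $u,v\in V(\Gamma)$ one may take a $u$-$v$ path and, if its length is odd, extend it by traversing the odd cycle (via some connecting path) so as to produce a $u$-$v$ walk of even length. All the hypotheses of Lemma~\ref{even} are met: $\Gamma$ and $\Sigma$ are regular with coprime valencies, both $R$-thin, and $\Sigma$ is vertex-transitive. Therefore Lemma~\ref{even} yields
\[
(u,i)^{\sigma\pi_\Sigma} = (v,i)^{\sigma\pi_\Sigma}
\]
for all $u,v \in V(\Gamma)$. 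Thus the entire fiber $V(\Gamma)\times\{i\}$ is mapped by $\sigma$ to vertices whose second coordinate is a single fixed element $i' \in V(\Sigma)$, i.e.\ $(V(\Gamma)\times\{i\})^\sigma \subseteq V(\Gamma)\times\{i'\}$. Since all fibers have the same cardinality $|V(\Gamma)|$ and $\sigma$ is a bijection on $V(\Gamma\times\Sigma) = \bigcup_{i\in V(\Sigma)} V(\Gamma)\times\{i\}$, this containment must in fact be an equality, so $\sigma$ permutes the fibers, which means $\sigma \in P(\Gamma,\Sigma)$.

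Since $\sigma$ was arbitrary, we obtain $\Aut(\Gamma\times\Sigma) \leqslant P(\Gamma,\Sigma)$, and the reverse containment is built into the definition of $P(\Gamma,\Sigma)$, so the two groups coincide. The only point that requires any care is the elementary graph-theoretic fact that in a connected non-bipartite graph every pair of vertices is joined by a walk of even length; once that is noted, the main engine of Lemma~\ref{even} delivers the conclusion with no further analysis, and in particular there is no genuine obstacle analogous to the antipodal argument that was needed in Lemma~\ref{Gamma-bipartite}.
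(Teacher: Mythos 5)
Your proof is correct, and the engine is the same as the paper's (Lemma~\ref{even} applied to an even $u$--$v$ walk in $\Gamma$, for every $u,v$ and every $i$), but you reach the even walk by a genuinely different and more direct route. The paper does not invoke the elementary fact that a connected non-bipartite graph joins every vertex pair by walks of both parities; instead it works inside $\Delta=\Gamma\times\Sigma$: it notes that the quotient $\Delta_{\calB}$ is isomorphic to $\Sigma$, which is bipartite, so no odd path can join $(u,i)$ to $(v,i)$ in $\Delta$; it then uses non-bipartiteness of $\Gamma$ only to get connectedness of $\Delta$ via Lemma~\ref{cvn}(a), concludes that some even path joins $(u,i)$ to $(v,i)$, and projects it to an even $u$--$v$ walk in $\Gamma$. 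Your argument -- take any $u$--$v$ path and, if its length is odd, correct the parity by a detour around an odd cycle -- is shorter, avoids the quotient-graph detour, and in fact never uses the bipartiteness of $\Sigma$, so it establishes the conclusion $\Aut(\Gamma\times\Sigma)=P(\Gamma,\Sigma)$ whenever $\Gamma$ is connected, non-bipartite and the hypotheses of Lemma~\ref{even} hold. The paper's version buys nothing extra here beyond staying uniform in style with the proof of Lemma~\ref{Gamma-bipartite}; your closing cardinality argument showing that fibre containment forces fibre equality is a correct (and slightly more explicit) finish than the paper's.
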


\begin{proof}
Let 
$$
\Delta = \Gamma\times\Sigma
$$ 
and 
$$
\calB=\{V(\Gamma)\times\{j\}\mid j\in V(\Sigma)\}.
$$ 
Then the quotient graph $\Delta_{\calB}$ of $\Delta$ with respect to $\calB$ is isomorphic to $\Sigma$. Since $\Sigma$ is bipartite, it follows that there is no odd cycle in $\Delta_{\calB}$. This implies that, for any $u,v\in V(\Gamma)$ and $i\in V(\Sigma)$, there is no path of odd length from $(u,i)$ to $(v,i)$ in $\Delta$. Since $\Gamma$ is non-bipartite, we obtain from Lemma \ref{cvn}(a) that $\Delta$ is connected. It follows that there is a path of even length from $(u,i)$ to $(v,i)$ in $\Delta$, and therefore there is a walk of even length from $u$ to $v$ in $\Gamma$. Thus, by Lemma \ref{even}, for any $\sigma \in \Aut(\Gamma\times\Sigma)$, we have $(u,i)^{\sigma\pi_\Sigma}=(v,i)^{\sigma\pi_\Sigma}$, which implies $\sigma\in P(\Gamma,\Sigma)$. Since this holds for any $\sigma \in \Aut(\Gamma\times\Sigma)$, it follows that $\Aut(\Gamma\times\Sigma)\leqslant P(\Gamma,\Sigma)$, yielding $\Aut(\Gamma\times\Sigma)=P(\Gamma,\Sigma)$ as required. 
\end{proof}
 
\subsection{Proof of Theorem~\ref{main-theorem2}}

We are now ready to prove Theorem~\ref{main-theorem2}.

\begin{proof}
Let $\Gamma$ be a regular graph and $\Sigma$ a vertex-transitive graph such that $\val(\Gamma)$ and $\val(\Sigma)$ are coprime.  

Suppose that $(\Gamma,\Sigma)$ is nontrivially unstable. Then by the definition of a nontrivially unstable graph pair, $\Gamma$ and $\Sigma$ are coprime connected $R$-thin graphs and at least one of them is non-bipartite. Hence, by Lemma \ref{cvn}, $\Gamma\times\Sigma$ is connected and $R$-thin. Moreover, by Lemmas~\ref{Gamma-bipartite} and \ref{Sigma-bipartite}, we have $\Aut(\Gamma\times\Sigma)=P(\Gamma,\Sigma)$. 
Since $(\Gamma,\Sigma)$ is unstable, it follows from Lemma \ref{nontrivial}(b) that at least one $\Sigma$-automorphism of $\Gamma$ is nondiagonal. 

Conversely, suppose that $\Gamma\times\Sigma$ is connected and $R$-thin and at least one $\Sigma$-automorphism of $\Gamma$ is nondiagonal. Then, by Lemma \ref{cvn}, $\Gamma$ and $\Sigma$ are connected $R$-thin graphs and at least one of them is non-bipartite. If both $\Gamma$ and $\Sigma$ are non-bipartite, then by Lemma \ref{coprime}(b), $(\Gamma,\Sigma)$ is stable, but this contradicts Lemma \ref{nontrivial}(a) as we assume that at least one $\Sigma$-automorphism of $\Gamma$ is nondiagonal. Thus exactly one of $\Gamma$ and $\Sigma$ is non-bipartite. Since $\val(\Gamma)$ and $\val(\Sigma)$ are coprime, by Remark~\ref{coprime-valencies}, $\Gamma$ and $\Sigma$ are coprime. Finally, by Lemmas \ref{Gamma-bipartite} and \ref{Sigma-bipartite}, we have $\Aut(\Gamma\times\Sigma)=P(\Gamma,\Sigma)$. Hence, by Lemma \ref{nontrivial}(b), $(\Gamma,\Sigma)$ is unstable. Moreover, $(\Gamma,\Sigma)$ is nontrivially unstable since $\Gamma$ and $\Sigma$ are coprime connected $R$-thin graphs.
\end{proof}

\section{Concluding remarks}
\label{Open}
 
In the case when $\Sigma = K_2$, Theorem \ref{main-theorem2} gives rise to the following result: A connected regular graph is unstable if and only if it has a nontrivial two-fold automorphism. As mentioned in the introduction, this is a special case of \cite[Theorem~3.2]{LMS2015}, where the graph in the statement is not required to be regular. So Theorem \ref{main-theorem2} is a partial generalization of \cite[Theorem~3.2]{LMS2015}. It would be interesting to study whether the result in Theorem \ref{main-theorem2} is still true if $\Sigma$ is not required to be regular with valency coprime to the valency of $\Gamma$.

As seen in Theorem \ref{main-theorem2}, there are close connections between stability of graph pairs $(\Gamma, \Sigma)$ and $\Sigma$-automorphisms of $\Gamma$. However, the notion of $\Sigma$-automorphisms of $\Gamma$ deserves further studies for its own sake. As far as we know, there are not many results on $\Sigma$-automorphisms of $\Gamma$ in the literature, except in the case when $\Sigma = K_1^{\circ}$ for which a $K_1^{\circ}$-automorphism of $\Gamma$ is an automorphism of $\Gamma$ in the usual sense, and in the case when $\Sigma = K_2$ for which a $K_2$-automorphism of $\Gamma$ is exactly a two-fold automorphism of $\Gamma$ \cite{LMS2015}. The concept of two-fold automorphisms was first introduced by Zelinka in \cite{Zelinka71, Zelinka72} for digraphs in his study of isotopies of digraphs and was extended to mixed graphs by Lauri et al. in \cite{LMS2015}. It is readily seen that, if $\Sigma_1$ is a spanning subgraph of $\Sigma_2$, then $\Aut_{\Sigma_2}(\Gamma) \le \Aut_{\Sigma_1}(\Gamma)$. So among all graphs $\Sigma$ of order $n$ the complete graph $K_n$ gives rise to the smallest possible group $\Aut_{\Sigma}(\Gamma)$. Therefore, it would be interesting to study the groups $\Aut_{K_n}(\Gamma)$ for $n \ge 2$. Note that $\Aut_{K_2}(\Gamma)$ is exactly the two-fold automorphism group $\Aut^{\mathrm{TF}}(\Gamma)$ of $\Gamma$ \cite{LMS2015}.

Finally, many questions about (nontrivially) unstable graph pairs may be asked. For example, by Lemma \ref{nontrivial}(a), if $\Gamma$ admits a nondiagonal $\Sigma$-automorphism then $(\Gamma, \Sigma)$ is unstable, and Theorem \ref{main-theorem2} determines a situation where this necessary condition is also sufficient. In general, one may ask the following question: For an unstable pair of graphs $(\Gamma,\Sigma)$,  under what conditions does $\Gamma$ admit a nondiagonal $\Sigma$-automorphism? One may also study the stability of $(\Gamma, \Sigma)$ for various special families of graphs $\Gamma$ and/or various special families of graphs $\Sigma$. Possible candidates for $\Gamma$ include circulant graphs \cite{QXZ2019, Wilson2008}, arc-transitive graphs, generalized Petersen graphs $\mathrm{GP}(n,k)$ \cite{QXZ2020}, etc. and potential choices for $\Sigma$ include complete graphs $K_n$, complete bipartite graphs $K_{n,n}$, cycles $C_n$, etc. Obviously, this area of research is wide open and pleasantly inviting.
 
\vskip0.1in
\noindent\textsc{Acknowledgements.}  
We are grateful to the anonymous referees for their helpful comments and Dr Jiyong Chen for his valuable advices. Part of the work was done during a visit of the first author to The University of Melbourne. The first author would like to thank The University of Melbourne for its hospitality during her visit and Beijing Jiaotong University for its financial support. She also thanks the National Natural Science Foundation of China (11671030) for its financial support during her PhD. The first author was supported by the Fundamental Research Funds for Beijing Universities allocated to Capital University of Economics and Business(XRZ2020058). The third author was supported by the National Natural Science Foundation of China (11671030,12071023).

\end{document}